\newcommand{\comment}[1]{}
\newcommand*\patchAmsMathEnvironmentForLineno[1]{%
  \expandafter\let\csname old#1\expandafter\endcsname\csname #1\endcsname
  \expandafter\let\csname oldend#1\expandafter\endcsname\csname end#1\endcsname
  \renewenvironment{#1}%
     {\linenomath\csname old#1\endcsname}%
     {\csname oldend#1\endcsname\endlinenomath}}%
\newcommand*\patchBothAmsMathEnvironmentsForLineno[1]{%
  \patchAmsMathEnvironmentForLineno{#1}%
  \patchAmsMathEnvironmentForLineno{#1*}}%
\newtheorem{theorem}{Theorem}[section]
\newtheorem{problem}{Problem}
\newtheorem{corollary}[theorem]{Corollary}
\newtheorem{defi}[theorem]{Definition}
\newtheorem{lemma}[theorem]{Lemma}
\newtheorem{remark}[theorem]{Remark}
\newcommand{\Z}{\mbox{${\mathbb Z}$}}
\title{Unavoidable chromatic patterns in $2$-colorings\\ of the complete graph}
\begin{document}

\maketitle

\begin{center}

\begin{multicols}{2}

Yair Caro\\[1ex]
{\small Dept. of Mathematics\\
University of Haifa-Oranim\\
Tivon 36006, Israel\\
yacaro@kvgeva.org.il}

\columnbreak

Adriana Hansberg\\[1ex]
{\small Instituto de Matem\'aticas\\
UNAM Juriquilla\\
Quer\'etaro, Mexico\\
ahansberg@im.unam.mx}\\[2ex]

\end{multicols}

Amanda Montejano\\[1ex]
{\small UMDI, Facultad de Ciencias\\
UNAM Juriquilla\\
Quer\'etaro, Mexico\\
amandamontejano@ciencias.unam.mx}\\[4ex]

\end{center}

\begin{abstract}
Let $G$ be a graph with $e(G)$ edges. We say that $G$ is omnitonal if for every  sufficiently large $n$  there exists a minimum integer ${\rm ot}(n,G)$ such that the following holds true:
For any $2$-coloring  $f: E(K_n) \to \{red, blue \}$ with more than ${\rm ot}(n,G)$ edges from each color, and for any pair of non-negative integers $r$ and $b$ with $r+b = e(G)$, there is a copy of $G$ in $K_n$ with exactly $r$ red edges and $b$ blue edges.
We give a structural characterization of omnitonal graphs from which we deduce that omnitonal graphs are, in particular, bipartite graphs, and prove further that, for an omnitonal graph $G$, ${\rm ot}(n,G)  =  \mathcal{O}(n^{2 - \frac{1}{m}})$,  where $m = m(G)$  depends only on $G$.
We also present a class of graphs for which ${\rm ot}(n,G) = ex(n,G)$, the celebrated Tur\'an numbers. Many more results and problems of similar flavor are presented.
\end{abstract}

\section{Introduction and problem setting}

Our main interest in this paper is a certain kind of problems that lie in the junction of Ramsey theory, extremal graph theory, zero-sum Ramsey theory and interpolation theorems in graph theory; general references to these topics are  \cite{AlCa,BiDi90,Bol,Ca96, HaPl, Pun,Soi,Zhou}.  

We  consider $2$-colorings of the set of edges $E(K_n)$ of the complete graph $K_n$. 
Given a graph $G$ with $e(G)$ edges, non-negative integers $r$ and $b$ such that $r+b=e(G)$, and a $2$-coloring $f:E(K_n)\to \{ red , blue \}$, we say that $f$ induces an \emph{$(r,b)$-colored copy} of $G$, if there is a copy of $G$ in $K_n$ such that $f$ assigns the color red to exactly $r$ edges and the color blue to exactly $b$ edges of that copy of $G$. 

By Ramsey's theorem we know that any $2$-coloring of $E(K_n)$ (where $n$ is sufficiently large) induces either a $(e(G),0)$-colored copy of $G$, or a $(0,e(G))$-colored copy of $G$. To force the existence of $G$ with other color patterns, we need, as a natural minimum requirement, not only to ensure a large $n$, but also a minimum amount of edges of each color. In this paper, we  
study which graphs are unavoidable under a prescribed color pattern in every $2$-coloring of $E(K_n)$, whenever $n$ is sufficiently large and there are enough edges from each color. 
A similar approach has been studied in \cite{Alp, CuMo, FoSu}, where the emphasis is given on determining the minimum $n$ required to guarantee the existence of a given graph with a prescribed color pattern in every coloring of $K_n$ where each color appears in some positive fraction of the edges of $K_n$. In contrast, our approach has more like a Tur\'an flavor in the sense that we focus our attention, on the one hand, on the maximum edge number that can have the smallest color class in a $2$-coloring of $E(K_n)$ which is free of a copy of the given graph in the prescribed pattern, and, on the other, in characterizing the extremal colorings.

Observe that, in case $e(G) \equiv 0$ (mod $2$), the study of the existence of a zero-sum copy of $G$ over $\Z$-weightings of $E(K_n)$, in particular over $\{ -1 ,1\}$-weightings of $E(K_n)$, carries along similarity to classical Ramsey theory by simply defining all red edges to have weight $-1$ and all blue edges to have weight $1$. Thus, a zero-sum copy of $G$ translates into a copy of $G$ with equal number of red and blue edges, or equivalently  to an $(e(G)/2 , e(G)/2 )$-colored copy of $G$. The study of the existence of such a balanced copy will be one of the purposes of this paper, thus further developing the line of research studied in \cite{CaYu, CHM1, CHM2}. The second problem we will focus on, and in which we will make use of interpolation techniques, deals with the existence of an $(r ,b)$-colored copy of $G$ for every pair of non-negative integers $r$ and $b$ such that $r+b=e(G)$. As in the previous case, this problem is related to the study of the existence of a zero-sum copy of $G$ over $\Z$-weightings of $E(K_n)$ with range $\{ -p ,q\}$, where $p$ and $q$ are positive integers with $\gcd(p,q)=1$ and $e(G) \equiv 0$ (mod $p+q$). These problems will lead us to the definition of two graph families which will be the center of this work: balanceable graphs and omnitonal graphs.\\


\subsection{Notation}
For a given graph $G$, we use $V(G)$ and $E(G)$ to denote the sets of vertices and, respectively, of edges of $G$. Given a partition of the vertex set $V(G) = X \cup Y$, we denote by $E(X,Y)$ the set of edges of $G$ with one end in $X$ and the other one in $Y$. Also, we define $n(G) = |V(G)|$, $e(G) = |E(G)|$ and $e(X,Y) = |E(X,Y)|$. For a set $W\subseteq V(G)$, $G[W]$ stands for the subgraph of $G$ induced by the vertices in $W$. Similarly, if $F \subseteq E(G)$, $G[F]$ denotes the graph induced by the edges from $F$. A set $W\subseteq V(G)$ is called \emph{independent} if $G[W]$ is edgeless.

Let $p$, $q$ and $k$ be non-negative integers. We will denote with $K_{p,q}$ the \emph{complete bipartite graph} with one partition set having $p$ vertices an the other $q$. The graph  $K_{1,k}$ will be also called a \emph{$k$-star}. Moreover, a \emph{$k$-path $P_k$} denotes a path on $k+1$ vertices and $k$ edges.  A graph $G$ is a  \emph{$(p,q)$-split graph} if there is a partition of the vertex set $V(G) = X \cup Y$ with $|X|=p$ and  $|V|=q$ such that $G[X] \cong K_p$  induces a complete graph and $Y$ is an independent set. Furthemore, the split graph $G$  is called \emph{complete} if $G[E(X,Y)] \cong K_{p,q}$. 

A \emph{coloring} of the edges of a graph $G$ is a mapping $f: E(G) \to \mathcal{C}$ to a set of colors $\mathcal{C}$. If $|\mathcal{C}| = c$, we talk about a  \emph{$c$-edge coloring}, or for short a \emph{$c$-coloring}. In the whole paper, we will deal with red and blue colorings on the complete graph, that is, we will consider mappings $f: E(K_n) \to  \{red, blue\}$. Since such a coloring induces a partition of the edge set of $K_n$ into the set of red edges and the set of blue edges, we can talk about the \emph{red graph} $R$ and the \emph{blue graph} $B$ induced by the red and, respectively, blue edges of this coloring. In the following, in order to avoid talking every time of the mapping, we will talk about the $2$-coloring $E(K_n) = E(R) \cup E(B)$, assuming implicitly that $R$ and $B$ are the graphs induced by the red and the blue edges.

\subsection{Problem setting}

\noindent
{\bf Balanceable graphs}\\
For a given graph $G$, we say that a $2$-coloring $E(K_n) = E(R) \cup E(B)$ contains a \emph{balanced} copy of $G$, if,  in the so colored $K_n$, we can find an $(e(G)/2, e(G)/2)$-colored copy of $G$ in case $e(G) \equiv 0$ (mod $2$), and a $(\lceil e(G)/2\rceil, \lfloor e(G)/2\rfloor)$-colored copy of $G$ or an $(\lfloor e(G)/2\rfloor,\lceil e(G)/2\rceil)$-colored copy of $G$ in case $e(G)\equiv  1$ (mod $2$).

\begin{defi}\label{def:bal}
For a given graph $G$ let ${\rm bal}(n,G)$ be the minimum integer, if it exists, such that any $2$-coloring $E(K_n) = E(R) \cup E(B)$ with $\min \{e(R), e(B)\} > {\rm bal}(n,G)$ contains a balanced copy of $G$. If ${\rm bal}(n,G)$ exists for every sufficiently large $n$, we say that $G$ is \emph{balanceable}\footnote{\label{foot:str_bal}For graphs with an odd number of edges, there is a stronger notion  of balanceable graphs that one can naturally consider: instead of seeking for one of both, a $(\lceil e(G)/2\rceil, \lfloor e(G)/2\rfloor)$-colored copy or a $( \lfloor e(G)/2\rfloor,\lceil e(G)/2\rceil)$-colored copy of $G$, one can study the existence  of both  patterns.}. For a balanceable graph $G$, let ${\rm Bal}(n,G)$ be the family of graphs with exactly ${\rm bal}(n,G)$ edges such that a coloring $E(K_n) = E(R) \cup E(B)$ with $\min \{e(R), e(B)\} = {\rm bal}(n,G)$ contains no balanced copy of $G$ if and only if $R$ or $B$ is isomorphic to some $H \in {\rm Bal}(n,G)$.
\end{defi}

We shall be interested in finding balanceable graphs as well as in, if possible, determining ${\rm bal}(n,G)$ or, if not, in finding good estimates. If ${\rm bal}(n,G)$ is known, we also consider the problem of characterizing the extremal colorings, meaning that we aim to determine the family ${\rm Bal}(n,G)$.
Note that, to prove that a graph $G$ is not balanceable, it is enough to exhibit infinitely many values of $n$ for which there is a $2$-edge coloring of $K_n$ with  the same or almost the same (differing by at most one unit) number of red and blue edges without a balanced copy of $G$.

As we shall show, there is a plethora of balanceable graphs, including $K_4$  but not any other complete graph $K_m$ with $e(K_m)$ even, as proved in \cite{CHM1}, where ${\rm bal}(n,K_4)$ and ${\rm Bal}(n,K_4)$ were  also determined.

The connection to the zero-sum analogue with $\{ -1 ,1\}$-weightings is already explained in the introduction section. The case when $e(G) \equiv 1$ (mod $2$) has no direct analogue as a zero-sum problem. However, we mention that such odd-case variations have been considered in \cite{CHM2} in the context of $\{ -1 ,1\}$-sequences. This  establishes the bridge between the current paper and the results on $\{ -1 ,1\}$-weightings given for instance in \cite{CaYu, CHM1, CHM2}.\\

\noindent
{\bf Omnitonal graphs}\\
As we will formally define below, omnitonal graphs will be those graphs that appear in all possible tonal variations of red and blue in every $2$-edge coloring of the complete graph, as long as the latter is large enough.

\begin{defi}
For a given graph $G$, let ${\rm ot}(n,G)$ be the minimum integer, if it exists, such that any $2$-coloring $E(K_n) = E(R) \cup E(B)$ with $\min \{e(R), e(B)\} > {\rm ot}(n,G)$ contains an $(r, b)$-colored copy of $G$ for any $r\geq 0$ and $b\geq 0$ such that $r+b=e(G)$. If ${\rm ot}(n,G)$ exists for every sufficiently large $n$, we say that $G$ is \emph{omnitonal}. For an omnitonal graph $G$, let ${\rm Ot}(n,G)$ be the family of graphs with exactly ${\rm ot}(n,G)$ edges such that a coloring $E(K_n) = E(R) \cup E(B)$ with $\min \{e(R), e(B)\} = {\rm ot}(n,G)$ contains no $(r,b)$-colored copy of $G$ for some pair $r,b \ge 0$ with $r+b = e(G)$ if and only if $R$ or $B$ is isomorphic to some $H \in {\rm Ot}(n,G)$.
\end{defi}

We shall be interested in finding omnitonal graphs as well as in, if possible, determining ${\rm ot}(n,G)$ or, if not, finding good estimates. If we are able to determine ${\rm ot}(n,G)$, we also consider the problem of characterizing the extremal colorings, that is, in finding ${\rm Ot}(n,G)$.

To determine if a graph $G$ is omnitonal is a problem within the scope of interpolation theorems in graph theory; as we shall see, we will incorporate proof techniques typical to this area together with techniques typical in Ramsey theory and extremal graph theory to obtain results concerning omnitonal graphs.

As in the case of balanceable graphs, to prove that a graph $G$ is not omnitonal, it is enough to exhibit infinitely many values of $n$ for which there is a $2$-edge coloring of $K_n$ with  the same or almost the same (differing by at most one unit) number of red and blue edges without an $(r, b)$-colored copy of $G$ for some $r\geq 0$ and $b\geq 0$ such that $r+b=e(G)$.

Observe that, if a graph $G$  is omnitonal, then it is balanceable  (by choosing $r$ and $b$ such that $r=b$ in case $e(G)\equiv  0$ (mod $2$) and $|r - b| =1$ in case $e(G)\equiv  1$ (mod $2$)) but not necessarily vice versa. For instance, $K_4$ is balanceable (see Theorem \ref{thm:Km}) but the following construction shows that $K_4$ is not omnitonal: Consider a partition of the vertex set $V(K_n) = A \cup B$ and a coloring of the edges such that all edges inside $A$ are colored red, all other edges are colored blue and we choose $|A|$ and $|B|$ so that $e(R)$ and $e(B)$ are equal (this can be done for infinitely many values of $n$, see Lemma \ref{la:infty_many_n}). Evidently, there are no $(2,4)$-colored copies of $K_4$ in this coloring. Moreover, concerning $K_m$ for arbitrary $m \ge 3$, observe that the pattern $(r,b) = (2, {m \choose 2} -2)$ can never be realized for a copy of $K_m$ in the coloring of $K_n$ given above, implying that no complete graph is omnitonal.

The connection to the zero-sum problem with $\{-p,q\}$-weightings is explained below.

\begin{remark}
Let $p$ and $q$ be positive integers with $\gcd(p,q)=1$ and let $G$ be an omnitonal  graph with $e(G) \equiv 0$ {\rm(mod  $p+q$)}. Then, for large enough $n$, any coloring $f:E(K_n) \rightarrow \{-p, q\}$ with $\min \{|f^{-1}(-p)|,|f^{-1}(q)| \} > {\rm ot}(n,G)$ contains a zero-sum copy $G^*$ of $G$ (that is, a copy $G^*$ of $G$ where $\sum_{e\in E(G^*)}f(e)=0$). To see this, define another coloring $g:E(K_n) \rightarrow \{red, blue\}$ where $g(e)=red$ iff $f(e)=-p$ and $g(e)=blue$ iff $f(e)=q$. Since $G$ is omnitonal, and $g$ is such that  $\min \{e(R),e(B) \} > {\rm ot}(n,G)$, we can find an $(r,b)$-colored copy $G^*$ of $G$ with $r=\frac{p e(G)}{p+q}$ and $b=\frac{q e(G)}{p+q}$. Then $G^*$ is a zero-sum copy of $G$ under coloring $f$: 
\[\sum_{e\in E(G^*)}f(e)=-qr+pb=\frac{-qp e(G^*)}{p+q}+\frac{pqe(G^*)}{p+q}=\frac{(-qp+qp)e(G^*)}{p+q}=0.\]
\end{remark}

We are unaware of a systematic study along the lines suggested by the omnitonal graphs, which we start here.

\subsection{Results}\label{sec:results}

After this introductory part, the article is divided into four sections. In Section \ref{sec:st}, we establish structural results for both balanceable and omnitonal graphs. In Corollary \ref{cor:char-BP} and Theorem~\ref{thm:fsp}, we give necessary and sufficient conditions for a graph to be balanceable and, respectively, omnitonal. The characterization of balanceable graphs is actually a particular case of Theorem \ref{thm:char-r-tonal} which provides necessary and sufficient conditions for a graph to be $r$-tonal (see Definition \ref{def:rtonal}). From these characterizations we derive easily, for example, that  trees are omnitonal and, therefore, also balanceable graphs. 
All these results are consequences of Theorem \ref{thm:RTZ}, proved in Section \ref{sec:RTZ}, which is a new version of a result by Cutler and Mont\'agh \cite{CuMo} solving a conjecture proposed by Bollob\'as (see \cite{CuMo}).  

When considering the problem of determining if a graph is balanceable or omnitonal, the study of two particular $2$-colorings of $E(K_{2t})$, which we will call \emph{type-$A$} (where one color forms a clique of order $t$) and \emph{type-$B$} (where one color forms two disjoint  cliques of order $t$) colorings, arises naturally.  It was shown in \cite{CuMo} that, for sufficiently large $n$, every $2$-edge-coloring of $K_n$ with a positive fraction of edges of each color contains a copy of a type-$A$ or a type-$B$ colored $K_{2t}$. Without seeking a sharp bound on $n$, our result prescinds from the quadratic amount of edges of each color, and replaces it with a sub-quadratic constraint, implying that, in case of existence, ${\rm bal}(n,G)$ and ${\rm ot}(n,G)$ are always sub-quadratic as functions of $n$. The nature of our proof of  Theorem \ref{thm:RTZ} avoids probabilistic arguments and relies on the Ramsey Theorem, the Tur\'an numbers and the Zarankiewicz numbers. It also prevents to get good upper bounds on ${\rm bal}(n,G)$ and ${\rm ot}(n,G)$, which are left for further research to be improved. 

Another major idea in this work, presented  in Section \ref{sec:AMO}, is the study of a class of graphs called \emph{amoebas} (see Definition \ref{def:amo}).  These graphs were  developed here along the proof techniques used in interpolation theorems in graph theory, building upon ideas from \cite{CHM1} and \cite{CHM2}. In particular, we prove that every amoeba is balanceable (Theorem \ref{teo:amoebaBal}) and that every bipartite amoeba $G$ is omnitonal with ${\rm ot}(n,G)={\rm ex}(n,G)$ and ${\rm Ot}(n,G)={\rm Ex}(n,G)$, where ${\rm ex}(n,G)$ and ${\rm Ex}(n,G)$ are the Tur\'an number for $G$ and the corresponding family of extremal graphs (Theorem \ref{thm:amoebasOT}).  
 
In Section \ref{sec:BAL}, we determine ${\rm bal}(n,G)$ as well as ${\rm Bal}(n,G)$ for paths and stars with an even edge number. Further, in Section \ref{sec:OMNI}, we determine ${\rm ot}(n,G)$ and ${\rm Ot}(n,G)$ for stars. Moreover, we show that ${\rm ot}(n,T) \le (k-1)n$ for every tree $T$ on $k$ edges. Since paths are bipartite amoebas, ${\rm ot}(n,P_k)$ and ${\rm Ot}(n,P_k)$ are already covered in Section \ref{sec:AMO}.

Finally, in Section~\ref{sec:OP}, we discuss further variants of these concepts and present several open problems.

\section{Structural results}\label{sec:st}

\subsection{Characterization of balanceable and omnitonal graphs}\label{sec:RTZ}

Let $t$ and $n$ be integers with $1 \le t < n$. A $2$-edge-colored complete graph $K_n$ is said to be of  \emph{type $A(t)$} if the edges of one of the colors induce a complete graph $K_t$, and it is of \emph{type $B(t)$} if the edges of one of the colors induce a complete bipartite graph $K_{t,n-t}$. If $n = 2t$, we eliminate the parameter $t$ and write for short \emph{type-$A$} and  \emph{type-$B$} colorings.

For a given graph $G$, we denote by $R(G,G)$ the \emph{$2$-color Ramsey number}, that is, the minimum integer $R(G,G)$ such that, whenever $n\geq R(G,G)$, any coloring $E(K_n) = E(R) \cup E(B)$ contains either a blue or a red copy of $G$. 
For a given graph $G$, we denote by ${\rm ex}(n,G)$ the \emph{Tur\'an number for $G$}, that is, the maximum number of edges in a graph with $n$ vertices containing no copy of $G$. The well-known K\H{o}vari-S\'os-Tur\'an theorem \cite{KST} implies that, for the balanced complete bipartite graph $K_{t,t}$,
\begin{equation}\label{eq:t}
{\rm ex}(n,K_{t,t}) < \frac{1}{2}\left((t-1)^{1/t} n^{2-1/t} + \frac{1}{2}(t-1)n\right).
\end{equation}

For a given positive integer $t$, we denote by $z(n,t)$ the \emph{Zarankiewicz number}, that is, the maximum number of edges in a bipartite graph with $n$ vertices in each part, containing no copy of $K_{t,t}$. Here again, the K\H{o}vari-S\'os-Tur\'an theorem yields the following upper bound for $z(n,t)$:
\begin{equation}\label{eq:z}
z(n,t) < (t-1)^{1/t} n^{2-1/t} + \frac{1}{2}(t-1)n.
\end{equation}

The following Theorem \ref{thm:RTZ} is a new version of a result first proved by Cutler and Mont\'agh \cite{CuMo} solving a conjecture raised by Bollob\'as (see \cite{CuMo}) about the existence, for $n$ sufficiently large, of a type-$A$ or type-$B$ colored $K_{2t}$ in every $2$-edge-coloring of $K_n$ with a positive fraction of edges of each color. The bound on the Ramsey parameter concerning this problem was further improved by  Fox and Sudakov in \cite{FoSu}. In both papers, the authors explicitly assume $\min\{e(R),  e(B)\} = \epsilon {n \choose 2}$ for some $\epsilon >  0$  and estimate an upper bound on the smallest $n$ for which this $\epsilon$-balancing forces a type-$A$ or a type-$B$ colored copy of $K_{2t}$. Not seeking a sharp bound on $n$, our result, in contrast, prescinds from the $\epsilon$-balancing restriction and replaces it with a  weaker constraint, which in turn allows us to give a subquadratic bound, as a function of $n$, on the minimum number of edges of each color required on the $2$-edge coloring of the $K_n$. The proof of our version avoids probabilistic arguments and uses only the classical Ramsey and Tur\'an numbers for complete bipartite graphs and the Zarankiewicz numbers instead.

\begin{theorem}\label{thm:RTZ}
Let $t$ be a positive integer. For all sufficiently large $n$, there exists a positive integer $m = m(t)$ and a number $\varphi(n,t) = \mathcal{O}(n^{2-\frac{1}{m}})$ such that any coloring $E(K_n) = E(R) \cup E(B)$ with $\min \{ e(R) ,e(B) \} \geq \varphi(n,t)$ contains a type-$A$ or a type-$B$ colored copy of $K_{2t}$.
\end{theorem}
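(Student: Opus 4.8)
The plan is to fix the integer $m=m(t)$ to be sufficiently large in terms of $t$ alone and to set $\varphi(n,t)={\rm ex}(n,K_{m,m})+1$, which is $\mathcal{O}(n^{2-1/m})$ by the K\H{o}vari--S\'os--Tur\'an bound \eqref{eq:t}. With this choice the hypothesis $\min\{e(R),e(B)\}\ge\varphi(n,t)$ forces \emph{both} the red graph $R$ and the blue graph $B$ to contain a copy of $K_{m,m}$, so the problem is reduced to extracting the desired configuration from a large monochromatic complete bipartite graph. The basic building block I would use is the following observation: if $X,Y$ are disjoint with $|X|=|Y|=t$, all edges between $X$ and $Y$ have one fixed colour, and each of $K_n[X]$, $K_n[Y]$ is monochromatic, then $K_n[X\cup Y]$ is a type-$A$ or type-$B$ colored $K_{2t}$. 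Indeed, if the cross edges are blue and $X$ is a red clique, a monochromatic $Y$ gives type-$A$ when $Y$ is blue and type-$B$ when $Y$ is red; the other colour patterns follow symmetrically, since both types are defined through ``one of the colours'' and are thus invariant under exchanging red and blue.

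The first main step analyses a single blue $K_{m,m}$ with parts $U,W$, chosen with $m\ge R(K_t,K_t)$. By Ramsey's theorem each of $U,W$ contains a monochromatic $K_t$. If $U$ or $W$ contains a \emph{red} $K_t$, call it $X$, then pairing $X$ with any monochromatic $K_t$ in the opposite part realises the building block (all cross edges lie in the blue $K_{m,m}$), and we are done. Otherwise neither $U$ nor $W$ contains a red $K_t$; taking $m\ge R(K_t,K_\ell)$, Ramsey's theorem then produces a \emph{blue} $K_\ell$ inside $U$, where $\ell=\ell(m,t)\to\infty$ as $m\to\infty$. Thus, from the blue $K_{m,m}$ we either finish or obtain a large blue clique $Q_B$. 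Running the same argument on the red $K_{m,m}$ guaranteed above (with the roles of the colours exchanged) we either finish or obtain a large red clique $Q_R$.

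If neither application finished, we hold a blue clique $Q_B$ and a red clique $Q_R$, each of size at least $\ell$. Since a red clique and a blue clique can share at most one vertex, deleting at most one vertex makes them disjoint while keeping their sizes at least $\ell-1$. Consider the complete bipartite graph between $Q_R$ and $Q_B$ inside $K_n$: it has at least $(\ell-1)^2$ edges, so some colour $c$ carries at least half of them. The Zarankiewicz bound \eqref{eq:z} gives $z(\ell-1,t)=o(\ell^2)$, so once $\ell$ (hence $m$) is large enough we have $(\ell-1)^2/2>z(\ell-1,t)$, and the colour-$c$ bipartite graph between $Q_R$ and $Q_B$ contains a $K_{t,t}$ with parts $X\subseteq Q_R$ and $Y\subseteq Q_B$. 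Here $X$ is a red clique, $Y$ is a blue clique, and all cross edges have colour $c$, so the building block yields a type-$A$ colored $K_{2t}$ for either value of $c$. Fixing $m=m(t)$ large enough to satisfy all Ramsey and Zarankiewicz thresholds simultaneously completes the argument, with $\varphi(n,t)=\mathcal{O}(n^{2-1/m})$ as claimed.

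I expect the main obstacle to be precisely the degenerate case in the first step, where Ramsey applied inside the parts of a monochromatic $K_{m,m}$ returns cliques of the \emph{same} colour as the cross edges, producing only a monochromatic $K_{2t}$, which is neither type-$A$ nor type-$B$. The device that resolves this is to run the first step in both colours and then cross two large cliques of opposite colours, which always yields type-$A$; the remaining work is the routine bookkeeping of combining the thresholds $R(K_t,K_t)$, $R(K_t,K_\ell)$, and the Zarankiewicz threshold for $\ell$ into a single $m(t)$ while keeping $\varphi$ subquadratic.
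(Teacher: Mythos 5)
Your proposal is correct, and its skeleton is the same as the paper's: a Tur\'an-type threshold produces a monochromatic $K_{m,m}$ in each colour, Ramsey applied inside its parts either finishes immediately (when a clique of the colour \emph{opposite} to the cross edges appears) or extracts a large clique of the same colour, and finally the red and blue cliques are crossed and the Zarankiewicz bound yields a monochromatic $K_{t,t}$, hence a type-$A$ copy of $K_{2t}$. You deviate in two places, both buying simplicity. First, the paper sets $\varphi(n,t)={\rm ex}(n,K_{m,m})+m(m-1)+2m(n-2m)+1$, where the extra terms exist solely so that, after deleting the vertices of the red $K_{m,m}$, enough blue edges survive to find a blue $K_{m,m}$ vertex-disjoint from it. You take the cleaner $\varphi(n,t)={\rm ex}(n,K_{m,m})+1$, tolerate overlapping copies, and restore disjointness only where it matters via the observation that a red clique and a blue clique share at most one vertex; this removes the lost-edge bookkeeping entirely. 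Second, the paper fixes one $q\ge t$ satisfying its Zarankiewicz inequality and sets $m=R(K_q,K_q)$, letting the monochromatic $K_q$'s serve both the early finish and the final crossing, whereas you split the roles between $R(K_t,K_t)$ and the asymmetric $R(K_t,K_\ell)$ with $\ell$ chosen so that $(\ell-1)^2/2>z(\ell-1,t)$; this is functionally equivalent. Two small repairs: your building block as literally stated fails when $X$, $Y$ and the cross edges all share one colour (a monochromatic $K_{2t}$ is neither type-$A$ nor type-$B$), so add the hypothesis, satisfied in every application you make, that one side is a clique of the colour opposite to the cross edges; and in the crossing step restrict both cliques to exactly $\ell-1$ vertices before invoking $z(\ell-1,t)$, since the Zarankiewicz number is defined for balanced parts.
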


\begin{proof}
Let $q\geq t$ be an integer such that 
\begin{equation}\label{eq:q}
(t-1)^{1/t}(2q)^{2-1/t}+(t-1)q+1\leq 2q^2,
\end{equation}
and set $m=R(K_q,K_q$). Now define 
\[\varphi(n,t) = {\rm ex}(n, K_{m,m}) + m(m-1) + 2m(n-2m)+1,\]
which, by (\ref{eq:t}), is clearly $\mathcal{O}(n^{2-\frac{1}{m}})$. Assume $n$ to be large enough such that we can take a coloring $E(K_n) = E(R) \cup E(B)$ with $\min \{ e(R) ,e(B) \} \geq \varphi(n,t)$. This is possible since $\varphi(n,t) = o(n^2)$. By definition, there is a monochromatic, say red, copy of $K_{m,m}$ in $K_n$. Let $X\cup Y$ be a vertex set partition of such a red copy of $K_{m,m}$, where $|X|=|Y|=m$ and all edges between $X$ and $Y$ are red. Consider now the complete graph $K_{n-2m}$ obtained from $K_n$ by removing the vertex set $X\cup Y$. Since we lose at most 2${m\choose 2}+2m(n-2m)$ blue edges, by the definition of $\varphi(n,t)$, there are at least ${\rm ex}(n-2m,K_{m,m})+1$ blue edges in $K_{n-2m}$. Hence, 
there is a blue copy of $K_{m,m}$ in  $K_{n-2m}$. Let $Z\cup W$ be a vertex set partition of such a blue copy of $K_{m,m}$, where $|Z|=|W|=m$ and all edges between $Z$ and $W$ are blue. Observe that the red and the blue copies of $K_{m,m}$ that we obtain are vertex disjoint.

Now consider the $2$-edge colored graph induced by $X\cup Y$. By the definition of $m$, we know that there are monochromatic copies of $K_q$ inside both $X$ and $Y$. If at least one of these monochromatic copies of $K_q$ is blue then, since all edges between $X$ and $Y$ are red, we will have a copy of $K_{2q}$ which is either of type $A$ or of type $B$; and since $q\geq t$ we are done in this case. Otherwise, we get two red monochromatic copies of $K_q$, one inside $X$ an the other inside $Y$, which indeed is a monochromatic, red, copy of $K_{2q}$. Similarly, by looking at the $2$-edge colored graph induced by $Z\cup W$, either we are done or we get a blue copy of $K_{2q}$. Hence, we can assume that we have two vertex disjoint monochromatic copies of $K_{2q}$, one red and one blue. Call $C$ the set of vertices of the red one, and $D$ the set of vertices of the blue one.

Finally, we consider the $2$-edge colored complete bipartite graph, $K_{2q,2q}$, induced by $C\cup D$. Clearly, one of the colors, say red, has at least half of the edges. In other words, there are at least $\frac{1}{2}(2q)^2=2q^2$ red edges in $K_{2q,2q}$. By computing the upper bound (\ref{eq:z}) of the Zarankiewicz number $z(2q,t)$, we obtain the left hand of (\ref{eq:q}). Thus, by the definition of $q$, we gain a monochromatic copy of $K_{t,t}$ in $K_{2q,2q}$. That is, there are subsets $C'\subset C$ and $D'\subset D$, with $|C'|=|D'|=t$,  such that all edges between $C'$ and $D'$ are red. Observe that the $2$-edge colored  complete graph $K_{2t}$ induced by $C' \cup D'$ is of type $A$, which completes the proof.
\end{proof}

As we said before, for a given graph $G$, Ramsey's Theorem guaranties, for large enough $n$, the existence of either a $(0, e(G))$-colored copy or a $(e(G), 0)$-colored copy of $G$ in every $2$-coloring of $E(K_n)$, while, to force the existence of other color patterns, there also have to be enough edges from each color. The precise amount of edges from each color needed to this aim, if it exists, is the parameter that we are interested in. The following definition extends Definition  \ref{def:bal} from a balanced proportion of the colors to other proportion variations.

\begin{defi}\label{def:rtonal}
Let $G$ be a graph and $r$ an integer with $0 < r \le \left\lfloor e(G)/2\right\rfloor$. 
Let ${\rm bal_r}(n,G)$ be  the minimum integer, if it exists, such that every $2$-coloring $E(K_n) = E(R) \cup E(B)$ with $\min \{e(R), e(B)\} > {\rm bal_r}(n,G)$ contains, either an $(r, e(G)-r)$-colored copy of $G$, or an $(e(G)-r, r)$-colored copy of $G$. If ${\rm bal_r}(n,G)$ exists for every $n$ sufficiently large, we say that $G$ is \emph{$r$-tonal}.
\end{defi}

Observe that, if ${\rm bal_r}(n,G)$ exists, then ${\rm bal_r}(n,G) \le \frac{1}{2} \binom{n}{2}$. Clearly this happens, too, for omnitonal graphs: ${\rm ot}(n,G) \le \frac{1}{2} \binom{n}{2}$.\\

Theorem \ref{thm:RTZ} allows us to give a characterization of $r$-tonal (thus also balanceable) graphs and omnitonal graphs. We will use the following lemma that follows directly from Lemmas 3.1 and 3.2 given in \cite{CHM1}.

\begin{lemma}[\cite{CHM1}]\label{la:infty_many_n}
For infinitely many positive integers $n$, we can choose $t=t(n)$ in a way that 
the type-$A(t)$ coloring of $K_n$ is balanced.  Also,  for infinitely many positive integers $n$, we can choose $t=t(n)$ in a way that  the type-$B(t)$ coloring of $K_n$ is balanced.
\end{lemma}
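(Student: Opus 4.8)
The plan is to treat both statements as Diophantine \emph{balance equations} and to exhibit infinitely many integer solutions in each case; in fact I will arrange for the two color classes to have \emph{exactly} equal size, which is stronger than the ``differing by at most one'' requirement and immediately gives balanced colorings.

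First I would set up the type-$A(t)$ count. Here one color induces a $K_t$, so it receives $\binom{t}{2}$ edges while the other receives $\binom{n}{2}-\binom{t}{2}$. Exact balance is the condition $2\binom{t}{2}=\binom{n}{2}$, equivalently $2t(t-1)=n(n-1)$. Multiplying by $4$ and completing the square (with the substitution $x=2n-1$, $y=2t-1$) rewrites this as the negative Pell equation
\[
x^2-2y^2=-1 .
\]
It is classical that this equation has infinitely many solutions in positive integers, generated from $(x,y)=(1,1)$ by the recurrence $(x,y)\mapsto(3x+4y,\,2x+3y)$ (equivalently $x+y\sqrt2=(1+\sqrt2)^{2k+1}$); the first nontrivial one is $(x,y)=(7,5)$, giving $(n,t)=(4,3)$. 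I would then verify two small arithmetic facts: that every solution has $x$ and $y$ both odd — so that $n=(x+1)/2$ and $t=(y+1)/2$ are integers — and that $y<x$ for nontrivial solutions, so that $1\le t<n$. Both follow from $x^2=2y^2-1$: reducing modulo $8$ forces $x$ and then $y$ odd, while comparing $x^2=2y^2-1$ with $y^2$ gives $x>y$ whenever $y>1$.

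For the type-$B(t)$ statement I would instead give an explicit family. Now one color induces a $K_{t,n-t}$ with $t(n-t)$ edges, and exact balance is $2t(n-t)=\binom{n}{2}$, i.e.\ $4t(n-t)=n(n-1)$. Viewing this as a quadratic in $t$ shows $(n-2t)^2=n$, so $n$ must be a perfect square; conversely, setting $n=k^2$ and $t=\binom{k}{2}=\tfrac{k(k-1)}{2}$ gives $n-t=\tfrac{k(k+1)}{2}$ and hence $4t(n-t)=k^2(k^2-1)=n(n-1)$, as desired. As $k$ ranges over the integers $\ge 2$ this produces infinitely many admissible $n$, each with $1\le t<n$.

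The constructions are elementary once the balance equations are written down, so the only real content is in the type-$A$ case: recognizing it as a negative Pell equation and invoking the existence of infinitely many solutions. The accompanying bookkeeping — the parity argument guaranteeing integrality of $n$ and $t$, and the inequality $t<n$ — is the one place where care is needed, but it reduces to elementary congruence checks. The type-$B$ case, by contrast, is a direct construction requiring no number-theoretic input, which is why I would expect the Pell analysis of the type-$A$ case to be the main (and essentially the only) obstacle.
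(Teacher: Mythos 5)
Your proof is correct and follows essentially the same route as the paper's source for this lemma (Lemmas 3.1 and 3.2 of \cite{CHM1}): exact balance of the type-$A(t)$ coloring reduces to $2t(t-1)=n(n-1)$, i.e.\ the negative Pell equation $x^2-2y^2=-1$ with $x=2n-1$, $y=2t-1$, which has infinitely many (odd, integrality-compatible) solutions, while the type-$B(t)$ case is the explicit family $n=k^2$, $t=\binom{k}{2}$. The parity and $t<n$ checks you include are exactly the needed bookkeeping, so nothing is missing.
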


\begin{theorem}\label{thm:char-r-tonal}
Let $G$ be a graph and let $r$ be an integer with $0 < r \le \left\lfloor e(G)/2\right\rfloor$. Then $G$ is $r$-tonal if and only if $G$ has both a partition $V(G)=X \cup Y$ and a set of vertices $W\subseteq V(G)$ such that $e(X,Y), e(G[W]) \in \{r,  e(G) - r\}$.
\end{theorem}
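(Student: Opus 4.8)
The plan is to prove both implications using Theorem~\ref{thm:RTZ} together with Lemma~\ref{la:infty_many_n}, exploiting the fact that type-$A$ and type-$B$ colorings are the canonical extremal colorings for these tonality problems. First I would set up the two color patterns we care about: an $(r, e(G)-r)$-colored copy and an $(e(G)-r, r)$-colored copy. The key observation tying the combinatorial condition to the colorings is that, inside a type-$A$ colored $K_{2t}$ (where one color, say red, forms a clique $K_t$), a copy of $G$ placed on these vertices realizes the number of red edges as $e(G[W])$ for some $W \subseteq V(G)$ with $|W| \le t$, namely the set of vertices of $G$ mapped into the red clique; the remaining edges are blue. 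Similarly, inside a type-$B$ colored $K_{2t}$ (where one color forms a complete bipartite graph $K_{t,t}$), a copy of $G$ realizes as its red-edge count precisely $e(X,Y)$ for some bipartition $V(G) = X \cup Y$ induced by which side of the bipartition each vertex lands on. Thus the numbers realizable in these two extremal colorings are exactly the values $e(G[W])$ and $e(X,Y)$ ranging over all vertex subsets and all bipartitions of $G$.

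For the forward direction (necessity), I would argue contrapositively. Suppose $G$ fails the stated condition, so that \emph{either} no vertex subset $W$ gives $e(G[W]) \in \{r, e(G)-r\}$, \emph{or} no partition $V(G)=X \cup Y$ gives $e(X,Y) \in \{r, e(G)-r\}$. In the first case I would use Lemma~\ref{la:infty_many_n} to select, for infinitely many $n$, a balanced type-$A(t)$ coloring of $K_n$; by the observation above, every copy of $G$ in this coloring has a red-edge count of the form $e(G[W])$ (with one of the colors playing the role of the clique), which by assumption never equals $r$ and never equals $e(G)-r$. Hence this coloring, which has the same or almost the same number of edges in each color, contains neither an $(r, e(G)-r)$- nor an $(e(G)-r, r)$-colored copy of $G$, witnessing that $G$ is not $r$-tonal. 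The second case is symmetric, using a balanced type-$B(t)$ coloring and the fact that red-edge counts there are exactly the values $e(X,Y)$. This shows that the condition is necessary.

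For the reverse direction (sufficiency), assume $G$ has both a set $W$ with $e(G[W]) \in \{r, e(G)-r\}$ and a partition $V(G) = X \cup Y$ with $e(X,Y) \in \{r, e(G)-r\}$. I would take $t = n(G)$ (so that any copy of $G$ fits inside $K_{2t}$ in any prescribed placement) and invoke Theorem~\ref{thm:RTZ}: for all sufficiently large $n$ and any coloring with $\min\{e(R), e(B)\} \ge \varphi(n,t)$, there is a type-$A$ or a type-$B$ colored copy of $K_{2t}$. If we land in a type-$A$ copy, I place $G$ so that the set $W$ sits inside the monochromatic clique and the rest of $V(G)$ outside it; this realizes a copy of $G$ with $e(G[W])$ edges of the clique color, i.e.\ the pattern $(r, e(G)-r)$ or $(e(G)-r, r)$ according to which of the two values $e(G[W])$ equals. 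If instead we land in a type-$B$ copy, I place $G$ according to the partition $X \cup Y$ across the two sides of the complete bipartite graph, realizing $e(X,Y)$ edges of the bipartite color and again producing the required pattern. Setting ${\rm bal_r}(n,G) = \varphi(n, n(G))$ then gives an explicit (subquadratic) witness that $G$ is $r$-tonal.

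The main obstacle I anticipate is making the ``placement'' step fully rigorous: I must verify that inside a type-$A$ or type-$B$ colored $K_{2t}$ with $t = n(G)$ there really is enough room to embed $G$ with the prescribed vertex classes realized, i.e.\ that the monochromatic clique (respectively each side of the bipartition) has at least $|W|$ (respectively at least $\max\{|X|, |Y|\}$) vertices. Choosing $t \ge n(G)$ handles the clique case directly, and for the bipartite case one wants each side to have at least $\max\{|X|,|Y|\}$ vertices, which again follows from $t \ge n(G)$ since $\max\{|X|,|Y|\} \le n(G) \le t$. A secondary subtlety is the symmetry bookkeeping between red and blue: since the extremal colorings allow either color to play the structured role, I should be careful to state the realizable patterns as the unordered pair $\{r, e(G)-r\}$, which is exactly why the condition is phrased with the set $\{r, e(G)-r\}$ rather than an ordered pattern. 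Once these embedding and symmetry details are pinned down, both implications follow cleanly from Theorem~\ref{thm:RTZ} and Lemma~\ref{la:infty_many_n}.
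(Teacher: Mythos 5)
Your proposal is correct and follows essentially the same route as the paper: necessity via balanced type-$A(t)$ and type-$B(t)$ colorings supplied by Lemma~\ref{la:infty_many_n} (you phrase it contrapositively, the paper argues directly, but the content is identical), and sufficiency via Theorem~\ref{thm:RTZ} with $t = n(G)$ and the same placement of $W$ in the monochromatic clique, respectively of $X \cup Y$ across the bipartition. Your explicit checks of the embedding room and of the red/blue symmetry are details the paper leaves implicit, not a deviation in method.
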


\begin{proof}
Suppose that $G$ is $r$-tonal. Let $n$ be large enough such that ${\rm bal_r}(n,G)$ exists and chosen such that there is a balanced type-$A(t)$ coloring of $K_n$ for some $t = t(n)$, which is possible by Lemma \ref{la:infty_many_n}. Suppose, without loss of generality, that the graph induced by the red edges in such a coloring of $K_n$ is isomorphic to $K_t$. Since $G$ is $r$-tonal and ${\rm bal_r}(n,G) \le \frac{1}{2} \binom{n}{2} = e(R) = e(B)$, there must be a copy of $G$ in $K_n$ with $r$ or $e(G) - r$ red edges, implying that there is a set $W \subseteq V(G)$ with $e(G[W]) \in \{r, e(G) - r\}$.
Analogously, we take now an $n$ large enough such that ${\rm bal_r}(n,G)$ exists and chosen such that there is a balanced type-$B(t)$ coloring of $K_n$ for some $t = t(n)$, which, again, is possible by Lemma \ref{la:infty_many_n}. Suppose, without loss of generality, that the graph induced by the red edges in such a coloring of $K_n$ is isomorphic to $K_{t,n-t}$. Since $G$ is $r$-tonal and ${\rm bal_r}(n,G) \le \frac{1}{2} \binom{n}{2} = e(R) = e(B)$, there must be a copy of $G$ in $K_n$ with $r$ or $e(G) - r$ edges, implying that there is a partition $V(G)=X \cup Y$  with $e(X,Y) \in \{r, e(G) - r\}$. 

Conversely, suppose that $G$ has both a partition $V(G)=X \cup Y$ and a set of vertices $W\subseteq V(G)$ such that $e(X,Y), e(G[W]) \in \{r, e(G) - r\}$.  Let $E(K_n) = E(R) \cup E(B)$ be an edge coloring of $K_n$  with $\min \{ e(R) ,e(B) \} \geq \varphi(n,t)$, where $t = n(G)$ and $\varphi(n,t)$ is like in Theorem \ref{thm:RTZ}. Hence, for $n$ sufficiently large, there is a type-$A$ or a type-$B$ copy of $K_{2t}$. If this copy is of type $A$, say we have one red $K_t$ and one blue $K_t$ and all edges in between are blue, then we can find a copy of $G$ placing the set $W$ inside the red $K_t$ and the other vertices inside the blue $K_t$.
If this copy is of type $B$, say we have two red $K_t$'s joined by blue edges, then we can find a copy of $G$ placing the edge cut $e(X,Y)$ such that $X$ and $Y$ are each in one of the red $K_t$'s. \end{proof}

Since a graph $G$ is balanceable if and only if it is $\left\lfloor e(G)/2\right\rfloor$-tonal, the following corollary is immediate from Theorem \ref{thm:char-r-tonal}.

\begin{corollary}\label{cor:char-BP}
A graph $G$ is balanceable if and only if $G$ has both a partition $V(G)=X \cup Y$ and a set of vertices $W\subseteq V(G)$ such that $ e(X,Y), e(G[W]) \in \{\lfloor \frac{1}{2}e(G) \rfloor, \lceil \frac{1}{2}e(G) \rceil\}$.
\end{corollary}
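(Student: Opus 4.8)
The plan is to derive Corollary \ref{cor:char-BP} as a direct specialization of Theorem \ref{thm:char-r-tonal}. The key observation is definitional: by the introductory discussion, a graph $G$ is balanceable precisely when it is $r$-tonal for the specific value $r = \lfloor e(G)/2 \rfloor$. This is because a balanced copy of $G$ is, by Definition \ref{def:bal}, exactly a copy realizing the pattern $(\lceil e(G)/2 \rceil, \lfloor e(G)/2 \rfloor)$ or $(\lfloor e(G)/2 \rfloor, \lceil e(G)/2 \rceil)$, which is what an $r$-tonal copy demands when $r = \lfloor e(G)/2 \rfloor$ (note $e(G) - r = \lceil e(G)/2 \rceil$ in both parities). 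Hence ${\rm bal}(n,G) = {\rm bal}_{\lfloor e(G)/2 \rfloor}(n,G)$ and the two notions of ``existence for all large $n$'' coincide.

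With this identification in hand, I would simply invoke Theorem \ref{thm:char-r-tonal} with $r = \lfloor e(G)/2 \rfloor$. The theorem states that $G$ is $r$-tonal if and only if there exist a partition $V(G) = X \cup Y$ and a vertex set $W \subseteq V(G)$ with $e(X,Y), e(G[W]) \in \{r, e(G) - r\}$. Substituting $r = \lfloor e(G)/2 \rfloor$ gives the set $\{\lfloor \frac{1}{2}e(G) \rfloor, \lceil \frac{1}{2}e(G) \rceil\}$, since $e(G) - \lfloor e(G)/2 \rfloor = \lceil e(G)/2 \rceil$ regardless of the parity of $e(G)$. This is verbatim the condition asserted in the corollary, so the equivalence transfers immediately.

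The only point requiring a moment of care is the hypothesis on $r$ in Theorem \ref{thm:char-r-tonal}, namely $0 < r \le \lfloor e(G)/2 \rfloor$. Choosing $r = \lfloor e(G)/2 \rfloor$ clearly satisfies the upper bound with equality; the lower bound $r > 0$ holds as long as $e(G) \ge 2$, which is the only interesting regime (a graph with at most one edge is trivially balanceable, and the degenerate cases can be dismissed in a line). I expect no genuine mathematical obstacle here: the content is entirely in Theorem \ref{thm:char-r-tonal}, and the corollary is a bookkeeping exercise matching the value $r = \lfloor e(G)/2 \rfloor$ against the definition of balanceability. The proof can therefore be stated in one or two sentences, asserting the immediacy of the reduction.
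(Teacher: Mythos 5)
Your proposal is correct and matches the paper's own derivation exactly: the paper states that $G$ is balanceable if and only if it is $\left\lfloor e(G)/2\right\rfloor$-tonal and declares the corollary immediate from Theorem \ref{thm:char-r-tonal}, which is precisely your reduction, including the observation that $e(G)-\lfloor e(G)/2\rfloor = \lceil e(G)/2\rceil$. Your extra care about the hypothesis $r>0$ (handling $e(G)\le 1$ separately) is a small point the paper glosses over, but it is handled correctly and does not change the argument.
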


Adopting the same proof technieque from Theorem \ref{thm:char-r-tonal}, we obtain the next result.

\begin{theorem}\label{thm:fsp}
A graph $G$ is omnitonal if and only if, for every integer $r$ with $0 \le r \le e(G)$, $G$ has both a partition $V(G)=X \cup Y$  and a set of vertices $W\subseteq V(G)$ such that $e(X,Y) = e(G[W])= r$.
\end{theorem}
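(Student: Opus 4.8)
The plan is to mirror the proof of Theorem~\ref{thm:char-r-tonal}, but now demanding the relevant combinatorial patterns for \emph{every} value of $r$ in the range $0 \le r \le e(G)$ simultaneously, rather than for a single fixed $r$. The key observation is that omnitonality asks for an $(r,b)$-colored copy of $G$ for \emph{all} pairs $r+b = e(G)$ at once, which is precisely the conjunction over all admissible $r$ of the $r$-tonal requirement combined with its reflection: indeed, an $(r, e(G)-r)$-colored copy and an $(e(G)-r, r)$-colored copy must both be realizable, so the ``either/or'' slack present in Definition~\ref{def:rtonal} must be removed.

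For the forward direction, I would suppose $G$ is omnitonal and fix an arbitrary $r$ with $0 \le r \le e(G)$. First I would invoke Lemma~\ref{la:infty_many_n} to select a large $n$ admitting a balanced type-$A(t)$ coloring, where the red graph is isomorphic to $K_t$. Since omnitonality forces, in particular, an $(r, e(G)-r)$-colored copy of $G$ (here the hypothesis $\min\{e(R),e(B)\} = \tfrac12\binom{n}{2} \ge {\rm ot}(n,G)$ is what makes this work), and red edges of such a copy correspond exactly to edges inside the red clique $K_t$, the red edges form $G[W]$ for the vertex set $W$ placed in $K_t$; hence $e(G[W]) = r$ exactly. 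I would then repeat the argument with a balanced type-$B(t)$ coloring, whose red graph is $K_{t,n-t}$: red edges of an $(r,e(G)-r)$-colored copy of $G$ are exactly the edges crossing the bipartition, giving a partition $V(G) = X \cup Y$ with $e(X,Y) = r$. Since $r$ was arbitrary in the full range, this yields both desired families of patterns for every $r$.

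For the converse, I would take the structural hypothesis---for each $r$ there exist $W$ with $e(G[W]) = r$ and a partition $X \cup Y$ with $e(X,Y) = r$---and argue that every coloring with $\min\{e(R), e(B)\} \ge \varphi(n,t)$, $t = n(G)$, contains every color pattern. Theorem~\ref{thm:RTZ} supplies a type-$A$ or type-$B$ colored $K_{2t}$ for $n$ large. Given any target pattern $(r, e(G)-r)$, in the type-$A$ case I would place the set $W$ (with $e(G[W]) = r$) inside the monochromatic clique of the first color and the remaining vertices in the clique of the second color, so that exactly the $r$ edges of $G[W]$ take the first color; in the type-$B$ case I would place $X$ and $Y$ in the two cliques so the cut $e(X,Y) = r$ realizes the $r$ edges of one color. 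Running over all $r$ gives every pattern, establishing omnitonality.

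The main obstacle, and the point meriting the most care, is the asymmetry between ``$r$'' and ``$e(G)-r$'' that the omnitonal definition demands be broken. In the $r$-tonal setting one is content with \emph{either} $e(G[W]) = r$ \emph{or} $e(G[W]) = e(G)-r$, which is why Theorem~\ref{thm:char-r-tonal} carries the set $\{r, e(G)-r\}$; for omnitonality I must secure the \emph{exact} value $e(G[W]) = r$ (and $e(X,Y)=r$) with no reflection permitted. The clean way to handle this is to note that letting $r$ range over the \emph{entire} interval $[0, e(G)]$---not merely $[0, \lfloor e(G)/2\rfloor]$---makes the two-element set collapse: requiring the patterns for both $r$ and $e(G)-r$ is equivalent to requiring the exact value $r$ for every $r$, since as $r$ sweeps the full range so does $e(G)-r$. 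I would make this reduction explicit at the outset so that each application of Lemma~\ref{la:infty_many_n} and Theorem~\ref{thm:RTZ} targets a single precise red-edge count, and then verify carefully that the placement of $W$ (respectively of the bipartition) inside the type-$A$ (respectively type-$B$) structure realizes that count on the nose.
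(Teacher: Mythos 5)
Your proposal is correct and takes essentially the same route as the paper's own proof: the forward direction extracts $W$ and the partition $X\cup Y$ from balanced type-$A(t)$ and type-$B(t)$ colorings supplied by Lemma~\ref{la:infty_many_n}, and the converse applies Theorem~\ref{thm:RTZ} and places $W$ (or the bipartition) inside the type-$A$ (or type-$B$) structure. Your closing observation about the reflection $r \leftrightarrow e(G)-r$ sweeping the full range $[0,e(G)]$ is precisely how the paper handles the color-reversed cases, where it substitutes a set $W'$ with $e(G[W']) = e(G)-r$.
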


\begin{proof}
Suppose that $G$ is omnitonal. Let $n$ be large enough such that ${\rm ot}(n,G)$ exists and chosen such that there is a balanced type-$A(t)$ coloring of $K_n$ for some $t = t(n)$, which is possible by Lemma \ref{la:infty_many_n}. Suppose, without loss of generality, that the graph induced by the red edges in such a coloring of $K_n$ is isomorphic to $K_t$. Since $G$ is omnitonal and ${\rm ot}(n,G) \le \frac{1}{2} \binom{n}{2} = e(R) = e(B)$, there must be a copy of $G$ in $K_n$ with $r$ red edges for every $0 \le r \le e(G)$.  This implies that there is a set $W \subseteq V(G)$ with $e(G[W]) = r$ for every $0 \le r \le e(G)$. 
Analogously, we take now an $n$ large enough such that ${\rm ot}(n,G)$ exists and chosen such that there is a balanced type-$B(t)$ coloring of $K_n$ for some $t = t(n)$, which, again, is possible by Lemma \ref{la:infty_many_n}. Suppose, without loss of generality, that the graph induced by the red edges in such a coloring of $K_n$ is isomorphic to $K_{t,n-t}$. Since $G$ is omnitonal and ${\rm ot}(n,G) \le \frac{1}{2} \binom{n}{2} = e(R) = e(B)$, there must be a copy of $G$ in $K_n$ with $r$ edges for every $0 \le r \le e(G)$. It follows that there is a partition $V(G)=X \cup Y$  with $e(X,Y) = r$ for every $0 \le r \le e(G)$. 

Conversely, suppose that $G$ has both a partition $V(G)=X \cup Y$  and a set of vertices $W\subseteq V(G)$ such that $e(X,Y) = e(G[W])= r$ for every $0 \le r \le e(G)$. Let $E(K_n) = E(R) \cup E(B)$ be an edge coloring of $K_n$  with $\min \{ e(R) ,e(B) \} \geq \varphi(n,t)$, where $t = n(G)$ and $\varphi(n,t)$ is like in Theorem \ref{thm:RTZ}. Hence, for $n$ sufficiently large, there is a type-$A$ or a type-$B$ colored copy of $K_{2t}$. If this copy is of type $A$, then there are two possibilities: either we have one red $K_t$ and one blue $K_t$ and all edges in between are blue or the colors are reversed. In the first case, we can use a set $W$ with $e(G[W])= r$ to find a copy of $G$ with $r$ red edges and $e(G)-r$ blue edges. In the second, we can use a set $W'$ with $e(G[W'])= e(G)-r$ to find a copy of $G$ with $r$ red edges and $e(G)-r$ blue edges. The case of having a type-$B$ colored copy of $K_{2t}$ is similar.
\end{proof}

Having determined the structure of $r$-tonal and omnitonal graphs in Theorems \ref{thm:char-r-tonal} and \ref{thm:fsp}, we learn from Theorem \ref{thm:RTZ} that ${\rm bal_r}(n,G)$, thus also ${\rm bal}(n,G)$, and ${\rm ot}(n,G)$ are all sub-quadratic as functions of $n$.

\begin{corollary}\label{cor:subq}
If $G$ is an $r$-tonal graph, then ${\rm bal_r}(n,G) = \mathcal{O}(n^{2-\frac{1}{m}})$, where $m = m(G)$ depends only on $G$ (this holds true, in particular, for balanceable graphs). Also, if $G$ is omnitonal, then ${\rm ot}(n,G) = \mathcal{O}(n^{2-\frac{1}{m}})$, where $m = m(G)$ depends only on $G$.
\end{corollary}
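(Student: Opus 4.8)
The plan is to read the bounds straight off the sufficiency (converse) halves of the proofs of Theorems~\ref{thm:char-r-tonal} and~\ref{thm:fsp}, since those arguments already produce an explicit subquadratic threshold; what remains is merely to extract it and check that the exponent depends only on $G$. For an $r$-tonal graph $G$, Theorem~\ref{thm:char-r-tonal} guarantees a partition $V(G) = X \cup Y$ and a set $W \subseteq V(G)$ with $e(X,Y), e(G[W]) \in \{r, e(G)-r\}$; this is precisely the structural hypothesis consumed by the converse direction of that proof. So I would not reprove anything, only quantify the threshold that the existing embedding argument requires.

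Concretely, I would set $t = n(G)$ and invoke Theorem~\ref{thm:RTZ} to obtain the integer $m = m(t)$ and the quantity $\varphi(n,t) = \mathcal{O}(n^{2-\frac{1}{m}})$. The converse argument of Theorem~\ref{thm:char-r-tonal} shows that any coloring $E(K_n) = E(R) \cup E(B)$ with $\min\{e(R),e(B)\} \geq \varphi(n,t)$ contains a type-$A$ or type-$B$ colored copy of $K_{2t}$, into which the partition (for type $B$) or the set $W$ (for type $A$) embeds $G$ in the prescribed pattern. Hence every such coloring realizes the $(r, e(G)-r)$ or $(e(G)-r, r)$ pattern, and the minimality in Definition~\ref{def:rtonal} gives ${\rm bal_r}(n,G) \leq \varphi(n,t) - 1 = \mathcal{O}(n^{2-\frac{1}{m}})$. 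Since in Theorem~\ref{thm:RTZ} one has $m = m(t) = R(K_q,K_q)$ with $q = q(t)$, and here $t = n(G)$, the exponent $m = m(n(G))$ is a function of $G$ alone, as required. The balanceable case is the specialization $r = \lfloor e(G)/2 \rfloor$.

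The omnitonal case is handled identically, replacing Theorem~\ref{thm:char-r-tonal} by Theorem~\ref{thm:fsp}: its hypothesis (a partition and a vertex set realizing $e(X,Y) = e(G[W]) = r$ for every $0 \le r \le e(G)$) is exactly what the converse argument uses, and, crucially, the \emph{same} threshold $\varphi(n, n(G))$ works simultaneously for all patterns $r$, since the type-$A$/type-$B$ copy of $K_{2t}$ furnished by Theorem~\ref{thm:RTZ} does not depend on $r$. This yields ${\rm ot}(n,G) \leq \varphi(n, n(G)) - 1 = \mathcal{O}(n^{2-\frac{1}{m}})$ with $m = m(n(G))$. I expect no genuine obstacle here: all the difficulty is already absorbed into Theorem~\ref{thm:RTZ}, and the only points demanding care are the bookkeeping of the off-by-one between the ``$\geq \varphi(n,t)$'' sufficiency and the strict inequality in the definitions of ${\rm bal_r}(n,G)$ and ${\rm ot}(n,G)$, and verifying that taking $t = n(G)$ indeed makes $m$ depend on $G$ only.
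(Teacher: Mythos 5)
Your proposal is correct and matches the paper's reasoning exactly: the paper states Corollary~\ref{cor:subq} without a separate proof, treating it as immediate from the converse directions of Theorems~\ref{thm:char-r-tonal} and~\ref{thm:fsp}, which, just as you describe, take $t = n(G)$ in Theorem~\ref{thm:RTZ} so that any coloring with $\min\{e(R),e(B)\} \geq \varphi(n,t) = \mathcal{O}(n^{2-\frac{1}{m}})$ contains a type-$A$ or type-$B$ colored $K_{2t}$ into which $G$ embeds in the required pattern(s), with $m = R(K_q,K_q)$ for $q = q(t)$ depending only on $G$. Your attention to the off-by-one between the strict inequality in Definition~\ref{def:rtonal} and the ``$\geq \varphi(n,t)$'' threshold, and to the fact that one threshold serves all tones simultaneously in the omnitonal case, is exactly the bookkeeping the paper leaves implicit.
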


The next result concerns the chromatic number of omnitonal graphs.

\begin{theorem}
Omnitonal graphs are bipartite.
\end{theorem}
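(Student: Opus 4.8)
The plan is to derive bipartiteness directly from the structural characterization of omnitonal graphs provided by Theorem~\ref{thm:fsp}. That characterization guarantees, for \emph{every} integer $r$ with $0 \le r \le e(G)$, the existence of a vertex partition $V(G) = X \cup Y$ with $e(X,Y) = r$. The key observation is that I do not need the full strength of this condition: it suffices to exploit a single extreme value of the parameter, namely the maximal one $r = e(G)$.

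First I would invoke Theorem~\ref{thm:fsp} with $r = e(G)$ to obtain a partition $V(G) = X \cup Y$ satisfying $e(X,Y) = e(G)$. Since $e(X,Y)$ counts exactly those edges of $G$ having one endpoint in $X$ and the other in $Y$, and since this quantity equals the total number of edges $e(G)$, every edge of $G$ must belong to the cut $E(X,Y)$. Consequently there can be no edge with both endpoints in $X$ and no edge with both endpoints in $Y$, so both $X$ and $Y$ are independent sets. This is precisely the assertion that $(X,Y)$ is a bipartition of $G$, and hence $G$ is bipartite.

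There is essentially no obstacle here beyond recognizing which instance of the characterization to apply: the entire content is carried by the choice $r = e(G)$, the maximal admissible value, which forces the whole edge set into one cut. The only point I would flag explicitly is that the object furnished by Theorem~\ref{thm:fsp} is a genuine partition of the \emph{full} vertex set (possibly with $X$ or $Y$ empty), so that every edge of $G$ is accounted for by either $E(X,Y)$, $E(G[X])$, or $E(G[Y])$; this is what makes the identity $e(X,Y) = e(G)$ actually preclude the existence of edges internal to the parts, and thereby yields the bipartition.
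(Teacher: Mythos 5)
Your proof is correct, and it takes a slightly different route from the paper's. The paper argues directly from the extremal coloring: by Lemma~\ref{la:infty_many_n} it takes a balanced type-$B(t)$ coloring of $K_n$ in which the red graph is $K_{t,n-t}$, and observes that omnitonality forces a monochromatic red copy of $G$, so $G$ embeds in the bipartite graph $K_{t,n-t}$ and is therefore bipartite. You instead deduce the result formally from the characterization in Theorem~\ref{thm:fsp}, specializing to the single extreme value $r = e(G)$: a partition with $e(X,Y) = e(G)$ puts every edge in the cut, making $X$ and $Y$ independent. The two arguments have the same mathematical core --- the forward direction of Theorem~\ref{thm:fsp} for the partition condition is itself proved via exactly the type-$B$ coloring the paper uses here --- but your packaging has the advantage of isolating precisely which fragment of the characterization is responsible for bipartiteness (only $r = e(G)$, and only the partition half, not the set $W$), while the paper's direct proof is more self-contained and makes the geometric reason visible (the all-red copy literally sits inside a complete bipartite graph). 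Your side remark about empty parts is handled correctly: if $e(G) \ge 1$ the identity $e(X,Y) = e(G)$ forces both parts nonempty, and if $e(G) = 0$ the graph is trivially bipartite, so there is no gap.
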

\begin{proof}
Take an $n$ large enough such that ${\rm ot}(n,G)$ exists and chosen such that there is a balanced type-$B(t)$ coloring of $K_n$ for some $t = t(n)$, which is allowed by Lemma \ref{la:infty_many_n}. Suppose, without loss of generality, that the graph $R$, induced by the red edges in such a coloring of $K_n$, is isomorphic to $K_{t,n-t}$. Since $G$ is omnitonal and ${\rm ot}(n,G) \le \frac{1}{2} \binom{n}{2} = e(R) = e(B)$, there must be a red copy of $G$ contained in the red graph $R$, which is bipartite. Hence, $G$ must be bipartite.
\end{proof}

\begin{remark}
There are bipartite graphs which are not balanceable and hence neither omnitonal. For example, cycles $C_t$ of lenght $t \equiv 2 \;({\rm mod} \; 4)$ do not appear balanced in any type-$B$ colored $K_n$. Moreover, balanceable graphs are not necessarily bipartite: $K_4$ is an example (see Theorem \ref{thm:Km}). 
\end{remark}

In contrast with the fact that certain even length cycles are not balanceable as given in the above remark, the situation for trees (and for forests) is completely different.

\begin{theorem}\label{thm:treeOT}
Every tree is omnitonal.
\end{theorem}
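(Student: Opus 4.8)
The plan is to verify the structural criterion for omnitonality furnished by Theorem~\ref{thm:fsp}: writing $k = e(T)$ for a tree $T$, I must show that for every integer $r$ with $0 \le r \le k$ there exist both a vertex set $W \subseteq V(T)$ with $e(T[W]) = r$ and a partition $V(T) = X \cup Y$ with $e(X,Y) = r$. These two requirements can be handled separately, since they appear as independent conjuncts in the characterization.

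For the induced-subgraph condition I would use repeated leaf deletion. A tree on at least two vertices always has a leaf, and deleting a leaf $v$ yields the induced subtree $T - v$, which has exactly one fewer edge and is again a tree. Thus, starting from $W_0 = V(T)$ with $e(T[W_0]) = k$ and successively deleting a leaf of the current subtree, I obtain a nested chain $V(T) = W_0 \supset W_1 \supset \cdots \supset W_k$ with $e(T[W_i]) = k - i$, the point being that at each step the vertex removed is genuinely a leaf of $T[W_{i-1}]$, so exactly one edge disappears. Hence every value $r \in \{0, 1, \dots, k\}$ is realized as $e(T[W_{k-r}])$.

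For the cut condition I would exploit acyclicity. The claim is that in a tree \emph{every} subset $S \subseteq E(T)$ occurs as the edge cut of some partition; choosing any $S$ with $|S| = r$ then gives a partition with exactly $r$ crossing edges. To see this, label each edge ``cut'' if it lies in $S$ and ``non-cut'' otherwise, and seek a $2$-coloring of $V(T)$ in which cut edges are bichromatic and non-cut edges monochromatic. Because $T$ has no cycles, this system of same/different constraints is always consistent: root $T$, color the root arbitrarily, and propagate colors to children (equal color across a non-cut edge, opposite color across a cut edge); processing vertices in search order from the root can never create a conflict, precisely because there is no cycle to close. The resulting partition $V(T) = X \cup Y$ has edge cut exactly $S$, so $e(X,Y) = r$.

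Both conditions of Theorem~\ref{thm:fsp} therefore hold for every $r$ with $0 \le r \le e(T)$, and $T$ is omnitonal, completing the argument. I expect the cut condition to be the more delicate of the two to phrase cleanly; the essential observation is that the absence of cycles makes the edge-labeling constraint system automatically satisfiable, so that not merely some but in fact every edge subset of a tree arises as an exact cut.
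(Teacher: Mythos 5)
Your proof is correct, and it reaches the same structural criterion (Theorem~\ref{thm:fsp}) as the paper, but it verifies the two conditions by different means. The paper runs a single induction on $e(T)$: it deletes a leaf $v$ with neighbor $u$, invokes the hypothesis for $T' = T - v$, reuses the sets $W \subseteq V(T')$ verbatim for the induced-subgraph condition, and extends each partition $X' \cup Y'$ by placing $v$ on the same side as $u$ so the cut value is preserved, with the value $r = e(T)$ handled trivially at the end. Your treatment of the induced-subgraph condition via a nested leaf-deletion chain $W_0 \supset W_1 \supset \cdots \supset W_k$ is essentially the paper's induction unrolled, and it is sound: induced subgraphs compose, and removing a leaf of $T[W_{i-1}]$ drops exactly one edge. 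Where you genuinely diverge is the cut condition: you prove the strictly stronger statement that \emph{every} subset $S \subseteq E(T)$ is realized exactly as the edge cut of some bipartition, by rooting the tree and propagating colors (equal across non-cut edges, opposite across cut edges), consistency being automatic since every constraint lies on a tree edge and acyclicity leaves no cycle-parity obstruction; the paper, by contrast, only ever produces one partition per cardinality $r$ and never needs exact realizability of arbitrary edge subsets. The trade-off: the paper's induction is shorter and uniform across both conditions, while your argument isolates a clean reusable lemma (exact-cut realizability of arbitrary edge subsets in acyclic graphs), avoids induction bookkeeping entirely, and extends verbatim to forests --- which the paper instead obtains from the separate observation that disjoint unions of omnitonal graphs are omnitonal. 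One small point worth making explicit in your write-up: for $r = 0$ (and $r = e(T)$, since $T$ is connected) the partition must be allowed to have an empty part, which your propagation argument delivers automatically when $S = \emptyset$ and which the paper also implicitly permits.
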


\begin{proof}
Let $T$ be a tree. According to Theorem \ref{thm:fsp}, we have to verify that, for every integer $r$ with $0 \le r \le e(T)$, $T$ has both a partition $V(T)=X \cup Y$ and a set of vertices $W\subseteq V(T)$ such that $e(X,Y) = e(G[W])= r$. We proceed by induction on $e(T)$. If $e(T)=1$ then both conditions are clearly satisfied for $0\leq r\leq 1$. Let $T$ be a tree with $e(T)=m$, and let $v\in V(T)$ be a leaf where $u$ is the only vertex of $T$ adjacent to $v$.  By the induction hypothesis, the tree $T'=T- \{v\}$ satisfies that, for every $0\leq r\leq m-1=e(T')$, there are both a partition $V(T')=X' \cup Y'$ and a set of vertices $W\subseteq V(T')$ such that $e(X',Y') =e(T'[W])= r$.  Note that for every $0\leq r\leq m-1$ the subset   $W\subseteq V(T')\subset V(T)$ satisfies $e(T[W]) =r$. Likewise, for every $0\leq r\leq m-1$ we can obtain a partition $V(T)=X\cup Y$ with $e(X,Y) =r$ by taking $X=X'\cup \{v\}$ and $Y=Y'$ if $u\in  X'$, or $X=X'$ and $Y=Y'\cup \{v\}$ if $u\in Y'$. To show that there are both a partition $V(T)=X \cup Y$ and a set of vertices $W\subseteq V(T)$ such that $e(X,Y) =e(T[W])=m=e(T)$ is trivial and the proof is concluded. 
\end{proof}

It is not difficult to see that the disjoint union of two  omnitonal  graphs is again an omnitonal  graph. Hence,  it follows directly from Theorem \ref{thm:treeOT} that every forest is omnitonal.


\subsection{Amoebas}\label{sec:AMO}

In this section, we describe a class of graphs which we call amoebas. We are interested in such graphs since, as we shall see below, amoebas are balanceable and provide a wide family of omnitonal graphs, too. 

Given a graph $G$ of order $n(G)$ embedded in a complete graph $K_n$, where  $n\geq n(G)$, we say that $H$ (also embedded in $K_n$) is obtained from $G$ by an \emph{edge-replacement}, if for some $e_1\in E(G)$ and $e_2\in E(K_n)\setminus E(G)$, $E(H)= (E(G) \setminus \{e_1\}) \cup \{e_2\}$. Isolated vertices will play no role here, so all graphs  considered further on may be the ones induced by its corresponding edge set.

\begin{defi}\label{def:amo}
A graph $G$ is an \emph{amoeba} if there exist $n_0=n_0(G) >n(G)$, such that for all $n\geq n_0$ and any two copies $F$ and $H$ of $G$ in $K_n$, there is a chain $F = G_0, G_1, G_2, \cdots, G_t = H$ such that, for every $i \in \{1,\cdots, t\}$,  $G_i \cong G$ and $G_i$ is obtained from $G_{i-1}$ by an edge-replacement. 
\end{defi}

For example, it is not hard to see that a path $P_k$ is an amoeba for every $k\geq 1$, while a cycle $C_k$ is not an amoeba for any $k \geq 3$.\\

The following is a basic interpolation lemma for amoebas.

\begin{lemma}\label{lem:basic}
Let $G$ be an amoeba and consider a $2$-coloring $E(K_n) = E(R) \cup E(B)$ where  $n\geq n_0(G)$. Let $\alpha,\beta,\alpha', \beta'$ be integers such that $\alpha+\beta= \alpha'+\beta' = e(G)$ and $0\leq \alpha \leq \alpha'$ and $0\leq \beta' \leq \beta$. If there are both, an $(\alpha,\beta)$- and an $(\alpha',\beta')$-colored copy of $G$, then, there is an $(r,b)$-colored copy of $G$ for all integers $r$ and $b$ such that $r+b=e(G)$, $\alpha \leq r\leq  \alpha'$ and $\beta' \leq b\leq \beta$. 
\end{lemma}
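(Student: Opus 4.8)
The plan is to prove Lemma~\ref{lem:basic} by a discrete intermediate-value argument, using the edge-replacement operation that defines amoebas to produce a connected chain of copies of $G$ along which the number of red edges changes by at most one at each step. First I would fix the $2$-coloring $E(K_n) = E(R) \cup E(B)$ and, for any copy $G'$ of $G$ in $K_n$, write $\rho(G')$ for the number of red edges in $G'$; since $e(G')=e(G)$, specifying $\rho(G')$ determines the color pattern completely. The key observation is that if $H'$ is obtained from $G'$ by a single edge-replacement, deleting $e_1$ and adding $e_2$, then $\rho(H') - \rho(G')$ equals the (red) indicator of $e_2$ minus that of $e_1$, so $|\rho(H') - \rho(G')| \le 1$. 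Thus along any edge-replacement chain the value $\rho$ moves in unit (or zero) steps.

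Next I would invoke the amoeba property directly. Let $F_\alpha$ be the given $(\alpha,\beta)$-colored copy (so $\rho(F_\alpha)=\alpha$) and $F_{\alpha'}$ the given $(\alpha',\beta')$-colored copy (so $\rho(F_{\alpha'})=\alpha'$), where $\alpha \le \alpha'$. Since $n \ge n_0(G)$ and both $F_\alpha$ and $F_{\alpha'}$ are copies of $G$ in $K_n$, Definition~\ref{def:amo} guarantees a chain $F_\alpha = G_0, G_1, \ldots, G_t = F_{\alpha'}$ with each $G_i \cong G$ and each $G_i$ obtained from $G_{i-1}$ by an edge-replacement. Now consider the integer sequence $\rho(G_0), \rho(G_1), \ldots, \rho(G_t)$. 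It starts at $\alpha$, ends at $\alpha'$, and by the previous paragraph consecutive terms differ by at most $1$. A standard discrete intermediate-value principle then shows that this sequence attains every integer value $r$ with $\alpha \le r \le \alpha'$: indeed, a sequence of integers with unit-bounded increments cannot jump over any intermediate integer between its endpoints. Hence for each such $r$ there is an index $i$ with $\rho(G_i)=r$, and $G_i$ is an $(r, e(G)-r)$-colored copy of $G$. Setting $b = e(G)-r$ gives exactly the claimed $(r,b)$-colored copy, and the constraints $\alpha \le r \le \alpha'$ and $\beta' \le b \le \beta$ are equivalent since $r+b = \alpha+\beta = \alpha'+\beta' = e(G)$.

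I do not anticipate a serious obstacle here; the argument is essentially the discrete intermediate-value theorem dressed up with the amoeba chain. The only point requiring slight care is making the discrete intermediate-value step fully rigorous: one should argue that if the sequence passed from a value below $r$ to a value above $r$ without hitting $r$, some consecutive pair would differ by at least $2$, contradicting the unit-step bound; this can be phrased as ``let $i$ be the largest index with $\rho(G_i) \le r$ and observe $\rho(G_{i+1}) \le \rho(G_i)+1 \le r+1$ together with $\rho(G_{i+1}) > r$ forces $\rho(G_{i+1}) = r+1$ and $\rho(G_i)=r$,'' or more cleanly by induction on $r - \alpha$. A second minor bookkeeping point is to confirm that the hypotheses $0 \le \alpha \le \alpha'$ and $0 \le \beta' \le \beta$ are consistent with $\alpha+\beta = \alpha'+\beta' = e(G)$, which they are, so the target range $[\alpha,\alpha']$ for $r$ is nonempty and well-defined. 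With these details in place the lemma follows without any appeal to the coloring beyond the trivial unit-step bound on $\rho$, so the whole proof rests entirely on the combinatorial amoeba structure.
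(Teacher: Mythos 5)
Your proof is correct and follows essentially the same route as the paper: take the amoeba chain between the $(\alpha,\beta)$- and $(\alpha',\beta')$-colored copies, observe that each edge-replacement changes the red-edge count by at most one, and conclude by the discrete intermediate-value principle. Your explicit treatment of the intermediate-value step is merely a more detailed write-up of what the paper states in one line.
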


\begin{proof}
Under the hypothesis of the lemma, let $F$ be an $(\alpha, \beta)$-colored copy of  $G$, and $H$ be an $(\alpha',\beta')$-colored copy of $G$ with $0\leq \alpha \leq \alpha'$ and $0\leq \beta' \leq \beta$.  
Since $G$ is an amoeba, and  $n\geq n_0(G)$, we know there is a chain  $F = G_0, G_1, G_2, \cdots, G_t = H$ such that, for every $i\in\{1,\cdots, t\}$,  $G_i \cong G$ and $G_i$ is obtained from $G_{i-1}$ by an edge-replacement. Let $r_i=|R\cap E(G_i)|$ be the number of red edges in $G_i$, and  $b_i=|B\cap E(G_i)|$ be the number of blue edges in $G_i$, so that,  for every $i\in\{1,\cdots, t\}$,  $G_i$ is an $(r_i,b_i)$-colored copy of $G$. Observe that an edge-replacement modifies the color pattern in at most one unit, that is, for every $i\in\{1,\cdots, t\}$, $|r_i-r_{i-1}|\leq 1$ as well as $|b_i-b_{i-1}|\leq 1$ and $r_i+b_i=e(G)$. Thus, if we start with an $(r_0,b_0)$-colored copy of  $G$, and we end with an $(r_t,b_t)$-colored copy of $G$, we must cover all $(r,b)$-color patterns with $\alpha = r_0\leq r\leq  r_t = \alpha'$ and $\beta' = b_t\leq b\leq b_0 = \beta$.
\end{proof}

\begin{remark}\label{rem:bip}
Since, by the K\H{o}vari-S\'os-Tur\'an theorem \emph{\cite{KST}}, ${\rm ex}(n,G)=o(n^2)$ for any bipartite graph $G$, we have, for large enough $n$, $2({\rm ex}(n,G)+1)\leq \binom{n}{2}$. This means that we can consider $2$-colorings $E(K_n) = E(R) \cup E(B)$ with $\min \{e(R), e(B)\} \geq {\rm ex}(n,G)+1$ if $n$ is sufficiently large.
\end{remark}

Note that Lemma \ref{lem:basic} implies that, for a given amoeba $G$ and a given $2$-coloring $E(K_n) = E(R) \cup E(B)$, where  $n\geq n_0(G)$, if we can find both a $(0,e(G))$-colored copy and an $(e(G),0)$-colored copy of $G$, then  the so colored $K_n$ will contain the graph $G$ in every possible $(r,b)$-color pattern for $r$ and $b$ with $r+b=e(G)$, $0 \leq r\leq e(G)$ and $0 \leq b\leq e(G)$. Therefore, by means of Lemma~\ref{lem:basic} and Remark \ref{rem:bip}, we can prove our next theorem.

\begin{theorem}\label{thm:amoebasOT}
Every bipartite amoeba $G$ is omnitonal with ${\rm ot}(n,G)={\rm ex}(n,G)$ and ${\rm Ot}(n,G)={\rm Ex}(n,G)$, provided $n$ is large enough to fulfill $\binom{n}{2} \geq 2{\rm ex}(n,G)+1$ and  $n\geq n_0(G)$.
\end{theorem}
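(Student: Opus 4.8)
The plan is to combine the interpolation machinery of Lemma~\ref{lem:basic} with the defining property of the Tur\'an number, namely that any graph on $n$ vertices with more than ${\rm ex}(n,G)$ edges must contain a copy of $G$. First I would establish omnitonality and the upper bound ${\rm ot}(n,G) \le {\rm ex}(n,G)$ simultaneously. Given any $2$-coloring $E(K_n) = E(R) \cup E(B)$ with $\min\{e(R), e(B)\} > {\rm ex}(n,G)$, both color classes exceed the Tur\'an threshold, so $R$ contains a copy of $G$ (an $(e(G),0)$-colored copy) and $B$ contains a copy of $G$ (a $(0,e(G))$-colored copy). By the observation following Lemma~\ref{lem:basic}, the amoeba property then forces an $(r,b)$-colored copy for every $r+b = e(G)$, $0 \le r \le e(G)$. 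This shows that $G$ is omnitonal with ${\rm ot}(n,G) \le {\rm ex}(n,G)$; Remark~\ref{rem:bip} and the hypothesis $\binom{n}{2} \ge 2{\rm ex}(n,G)+1$ guarantee that such colorings exist.

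For the matching lower bound, I would take an extremal $G$-free graph $H \in {\rm Ex}(n,G)$, color its edges red and all remaining edges of $K_n$ blue. Then $e(R) = {\rm ex}(n,G)$, and since $\binom{n}{2} \ge 2{\rm ex}(n,G)+1$ we have $e(B) = \binom{n}{2} - {\rm ex}(n,G) > {\rm ex}(n,G)$, so $\min\{e(R), e(B)\} = {\rm ex}(n,G)$. Because $R \cong H$ is $G$-free, no $(e(G),0)$-colored copy exists, so some color pattern is unrealized. Hence ${\rm ot}(n,G) \ge {\rm ex}(n,G)$, and together with the first step, ${\rm ot}(n,G) = {\rm ex}(n,G)$.

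Finally, to identify ${\rm Ot}(n,G) = {\rm Ex}(n,G)$, I would verify the biconditional in the definition of ${\rm Ot}(n,G)$ for colorings with $\min\{e(R),e(B)\} = {\rm ex}(n,G)$; assume without loss of generality that $R$ is the smaller class, so $e(R) = {\rm ex}(n,G)$ and, by the strict inequality above, $e(B) > {\rm ex}(n,G)$, whence $B$ always contains a copy of $G$. The construction of the previous step already supplies the easy implication: if $R$ is isomorphic to some $H \in {\rm Ex}(n,G)$, then $R$ is $G$-free and the $(e(G),0)$-pattern is missing. Conversely, if some pattern is missing, then $R$ cannot contain a copy of $G$ --- otherwise the $(e(G),0)$- and $(0,e(G))$-patterns would both be present and Lemma~\ref{lem:basic} would produce every intermediate pattern --- so $R$ is a $G$-free graph with exactly ${\rm ex}(n,G)$ edges, that is, $R \in {\rm Ex}(n,G)$. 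This closes the characterization.

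The argument is largely routine once Lemma~\ref{lem:basic} is available; the only point requiring care is the bookkeeping over which color class is strictly the smaller one, and this is exactly where the hypothesis $\binom{n}{2} \ge 2{\rm ex}(n,G)+1$ (rather than $\ge 2{\rm ex}(n,G)$) is used: it forces $e(B) > {\rm ex}(n,G)$ \emph{strictly}, so the larger class automatically contains $G$ and the interpolation can be triggered regardless of the structure of $R$. The condition $n \ge n_0(G)$ is what licenses the amoeba edge-replacement chain underlying Lemma~\ref{lem:basic}.
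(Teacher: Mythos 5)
Your proof is correct and follows essentially the same route as the paper: the upper bound via Lemma~\ref{lem:basic} once both color classes exceed the Tur\'an threshold (using Remark~\ref{rem:bip}), the lower bound via a red extremal $G$-free graph, and the strict inequality $e(B) > {\rm ex}(n,G)$ forced by $\binom{n}{2} \ge 2{\rm ex}(n,G)+1$ to pin down the extremal colorings. Your direct verification of the biconditional for ${\rm Ot}(n,G)={\rm Ex}(n,G)$ is just a rephrasing of the paper's contradiction argument for a hypothetical $H \in {\rm Ot}(n,G) \setminus {\rm Ex}(n,G)$.
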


\begin{proof}
Let $G$ be  a bipartite amoeba. By Remark \ref{rem:bip} we can consider, for sufficiently large $n$, $2$-colorings of $E(K_n)$ with  $n\geq n_0(G)$ and at least $ {\rm ex}(n,G)+1$ edges of each color. Since, any coloring $E(K_n) = E(R) \cup E(B)$ with $\min \{e(R), e(B)\} \geq {\rm ex}(n,G)+1$ contains a $(0,e(G))$-colored copy of $G$ and an $(e(G),0)$-colored copy of $G$, by Lemma \ref{lem:basic}, there is  an $(r,b)$-colored copy of $G$ for all integers $r$ and $b$ such that $0\leq r, b\leq e(G)$ and $r+b=e(G)$. Thus, $G$ is omnitonal and  ${\rm ot}(n,G)\leq {\rm ex}(n,G)$. In order to see that ${\rm ex}(n,G)\leq {\rm ot}(n,G)$, notice that we can give a $2$-coloring of $E(K_n)$ with $\min \{e(R), e(B)\}= {\rm ex}(n,G)$ such that there are no $(e(G),0)$-colored copies of $G$, and therefore $G$ cannot be omnitonal. Further, observe that the fact that ${\rm ot}(n,G)={\rm ex}(n,G)$ implies that ${\rm Ex}(n,G) \subseteq {\rm Ot}(n,G)$. Suppose now there is a graph $H \in {\rm Ot}(n,G) \setminus {\rm Ex}(n,G)$ and let $E(K_n) = E(R) \cup E(B)$ be a coloring of the edges of $K_n$ such that $R \cong H$. Then $e(R) = {\rm ex}(n,G)$ but, since $R \notin Ex(n,G)$, $R$ contains a subgraph isomorphic to $G$, that is, there is an $(e(G),0)$-copy of $G$ contained in the colored $K_n$. Since  $2{\rm ex}(n,G)+1\leq \binom{n}{2}$, clearly $e(B) \ge {\rm ex}(n,G)+1$ and there is also a $(0,e(G))$-copy of $G$ in $K_n$. Hence, by Lemma \ref{lem:basic}, there is an $(r,b)$-copy of $G$ for every pair of non-negative integers $r,b$ with $r+b = e(G)$, a contradiction to the hypothesis that $R \cong H \in {\rm Ot}(n,G)$. Therefore,  ${\rm Ot}(n,G)={\rm Ex}(n,G)$.
\end{proof}

Since the balanceable property is not as restrictive as the omnitonal property, we will see that we can prescind from the bipartite condition to prove that every amoeba is balanceable. For the proof, we will make use of an old argument of Erd\H{o}s which states that every graph $G$ has a bipartition $V(G)= X \cup Y$ such that $e(X,Y)\geq \left\lceil e(G)/2\right\rceil$ (see Lemma 2.14 in \cite{FuSi}). Deleting edges if necessary, one can easily see that every graph $G$ contains a bipartite subgraph $B$ with $e(B)=\left\lceil e(G)/2\right\rceil$.

\begin{theorem}\label{teo:amoebaBal}
Every amoeba is balanceable.\footnote{Observe that the proof of this theorem yields actually that every amoeba is strongly balanceable in the sence discussed in the footnote of Definition \ref{def:bal}.}
\end{theorem}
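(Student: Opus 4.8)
The plan is to produce, in any $2$-coloring $E(K_n)=E(R)\cup E(B)$ with $\min\{e(R),e(B)\}$ above a suitable subquadratic threshold, two copies of $G$ with extreme colour ratios, and then let the amoeba property interpolate between them via Lemma \ref{lem:basic}. Concretely, it suffices to exhibit one copy of $G$ with at most $\lfloor e(G)/2\rfloor$ red edges and one copy with at most $\lfloor e(G)/2\rfloor$ blue edges; the latter is an $(r,b)$-colored copy with $r\ge\lceil e(G)/2\rceil$. Feeding these two copies into Lemma \ref{lem:basic} yields an $(r,b)$-colored copy of $G$ for every red-count $r$ lying between the two extremes, and since this range contains both $\lfloor e(G)/2\rfloor$ and $\lceil e(G)/2\rceil$, we obtain a balanced copy (indeed both balanced patterns, which is exactly why the argument yields strong balanceability in the sense of the footnote to Definition \ref{def:bal}).

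The engine for the two extreme copies is the Erd\H{o}s max-cut bound quoted just above the theorem: $G$ admits a bipartition $V(G)=X\cup Y$ with $e(X,Y)\ge\lceil e(G)/2\rceil$, so that the edges of $G$ lying inside $X$ or inside $Y$ number at most $\lfloor e(G)/2\rfloor$. First I would fix $t=n(G)$ and set the threshold at ${\rm ex}(n,K_{t,t})$, which is $o(n^2)$ by the K\H{o}vari-S\'os-Tur\'an theorem \cite{KST}; since $\min\{e(R),e(B)\}>{\rm ex}(n,K_{t,t})$, both colour classes contain a monochromatic copy of $K_{t,t}$. Taking a blue copy with parts $P,Q$ of size $t=n(G)$, I would embed $G$ into $K_n$ by mapping $X$ injectively into $P$ and $Y$ injectively into $Q$ (possible as $|X|,|Y|\le t$). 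In this copy every crossing edge of $G$ lands between $P$ and $Q$ and is therefore blue, while the at most $\lfloor e(G)/2\rfloor$ remaining edges sit inside $P$ or inside $Q$ and receive arbitrary colours; hence this copy has at most $\lfloor e(G)/2\rfloor$ red edges. Repeating the same embedding inside a red $K_{t,t}$ produces a copy with at most $\lfloor e(G)/2\rfloor$ blue edges.

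With these two copies in hand and $n\ge n_0(G)$, Lemma \ref{lem:basic} closes the argument as described, and en route one reads off ${\rm bal}(n,G)\le{\rm ex}(n,K_{t,t})=\mathcal{O}(n^{2-1/t})$, consistent with Corollary \ref{cor:subq}. It is worth emphasising that the amoeba hypothesis is used \emph{only} in the final interpolation step: the two extreme copies are constructed for an arbitrary graph $G$, relying solely on the completeness of $K_n$ and on the monochromatic $K_{t,t}$'s. The only mildly delicate bookkeeping is to check that $n$ is large enough both for a coloring with $\min\{e(R),e(B)\}>{\rm ex}(n,K_{t,t})$ to exist (equivalently $2\,{\rm ex}(n,K_{t,t})<\binom{n}{2}$) and for $n\ge n_0(G)$ to hold, both of which follow from ${\rm ex}(n,K_{t,t})=o(n^2)$. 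Rather than a true obstacle, the one conceptual step is the observation that the max-cut bipartition lets one hide at least half of the edges of $G$ inside a single monochromatic bipartite blob, thereby capping the minority colour at $\lfloor e(G)/2\rfloor$; everything else is routine.
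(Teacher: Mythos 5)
Your proof is correct and takes essentially the same route as the paper: Erd\H{o}s' max-cut observation supplies two copies of $G$ whose minority colour is at most $\lfloor e(G)/2\rfloor$, and Lemma \ref{lem:basic} interpolates between them to yield both balanced patterns (hence strong balanceability). The only deviation is quantitative: the paper thresholds at ${\rm ex}(n,H)+1$ for the bipartite subgraph $H \subseteq G$ with exactly $\lceil e(G)/2\rceil$ edges, finding a monochromatic copy of $H$ directly and completing it arbitrarily to a copy of $G$, whereas you threshold at ${\rm ex}(n,K_{t,t})$ with $t = n(G)$ and embed the max-cut bipartition into a monochromatic $K_{t,t}$, giving the slightly weaker bound ${\rm bal}(n,G) \le {\rm ex}(n,K_{t,t})$ in place of ${\rm bal}(n,G) \le {\rm ex}(n,H)$.
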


\begin{proof}
Let $G$ be an amoeba. By the observation above, we may consider a bipartite subgraph $B$ of $G$ having exactly $e(B)=\left\lceil e(G)/2\right\rceil$ edges. Let $E(K_n) = E(R) \cup E(B)$ be a $2$-coloring  with $\min \{e(R), e(B)\} \geq {\rm ex}(n,B)+1$, which is possible for $n$ large enough because of Remark \ref{rem:bip}. Hence, we know that $K_n$  contains a $(0,e(B))$-colored copy of $B$ and a $(e(B),0)$-colored copy of $B$. Now we can complete those copies of $B$ into copies of $G$ in an arbitrary way to get an $(\alpha,\beta)$-colored copy of $G$, and an $(\alpha',\beta')$-colored copy of $G$, where $\left\lceil e(G)/2\right\rceil\leq \beta$ and $\left\lceil e(G)/2\right\rceil\leq \alpha'$. Since  $\alpha+\beta= \alpha'+\beta' = e(G)$, we also have $\alpha\leq \left\lfloor e(G)/2\right\rfloor$ and $\beta'\leq \left\lfloor e(G)/2\right\rfloor$. Altogether we have $\alpha\leq \left\lfloor e(G)/2\right\rfloor \leq \left\lceil e(G)/2\right\rceil\leq \alpha'$ and $\beta'\leq \left\lfloor e(G)/2\right\rfloor \leq \left\lceil e(G)/2\right\rceil\leq \beta$. Hence, Lemma \ref{lem:basic} implies that $K_n$  contains a $\left(\left\lfloor e(G)/2\right\rfloor, \left\lceil e(G)/2\right\rceil\right)$-copy and a $\left(\left\lceil e(G)/2\right\rceil, \left\lfloor e(G)/2\right\rfloor\right)$-copy of $G$. 
\end{proof}

\begin{remark}
Observe that not every amoeba is bipartite. For example, one can easily check that odd cyles together with a pendant vertex are non-bipartite amoebas. Moreover, there are also omnitonal graphs which are not amoebas. For instance, due to the fact that every tree is omnitonal (Theorem \ref{thm:treeOT}), stars $K_{1,k}$ with $k \ge 3$ leaves are omnitonal, too, but it is evident that they are not amoebas.
\end{remark}

Amoebas are interesting not only because of their good behavior concerning balanceable and omnitonal graphs, but we think they are interesting for their own. A forthcoming paper under preparation \cite{CHM3} will deal with such an analysis.


\section{Balanceable graphs}\label{sec:BAL}

The study of balanceable graphs (in disguise) has already been started in the following three recent papers. The first one is a paper by Caro and Yuster \cite{CaYu}, where zero-sum weighting over $\Z$ are introduced and several zero-sum theorems are proved that fit to the framework of balanceable graphs as explained above. The other two \cite{CHM1, CHM2} develop further the study on $\{-1 ,1\}$-weightings on the set of positive integers $\{1,2,...,n\}$ or on the set of edges of $K_n$, forcing zero-sum copies of given structures (blocks of consecutive integers in the first case,  copies of graphs in the second case) which can be translated into the language of colorings and balanceable graphs.     
 
We restate here, in the language of red-blue coloring, instead of $\{-1 ,1\}$-weighting and zero-sum language, a part of the main theorem from \cite{CHM2}, which is a sort of role-model for the results in this section.

\begin{theorem}[\cite{CHM2}]\label{thm:Km}
\mbox{}
\begin{itemize}
\item[(i)] For any positive integer $m \ge 2$,  $m \neq 4$, $m \equiv 0,1 \;({\rm mod}\; 4)$  the complete graph $K_m$ is not balanceable. 
\item[(ii)] The complete graph $K_4$ is balanceable with ${\rm bal}(n,K_4)= n$, if $n \equiv 0 \;({\rm mod}\; 4)$, and ${\rm bal}(n,K_4)=  n-1$, else. Moreover, ${\rm Bal}(n,K_4)=\{H\}$ with $H=J \cup \bigcup_{i=1}^{q} C_4$, where $J \in \{\emptyset, K_1, K_2, P_2\}$, depending on the residue of $n$ $({\rm mod}\; 4)$, and $q = \lfloor \frac{n}{4} \rfloor$.

\end{itemize}
\end{theorem}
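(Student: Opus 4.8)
The plan is to route every balanceability question through the characterization in Corollary~\ref{cor:char-BP}. For $G = K_m$ we have $K_m[W] \cong K_{|W|}$, so $e(K_m[W]) = \binom{|W|}{2}$, and for a partition $V(K_m) = X \cup Y$ we have $e(X,Y) = |X|\,|Y|$. Since $\binom{m}{2}$ is even exactly when $m \equiv 0,1 \pmod 4$, in that range a balanced copy is a $\bigl(\tfrac12\binom{m}{2}, \tfrac12\binom{m}{2}\bigr)$-copy, and Corollary~\ref{cor:char-BP} says $K_m$ is balanceable if and only if both of the following hold: (a) there is an integer $w$ with $\binom{w}{2} = \tfrac12\binom{m}{2}$, and (b) there is an integer $a$ with $a(m-a) = \tfrac12\binom{m}{2}$. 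So to prove (i) I would show that for $m \equiv 0,1 \pmod 4$ with $m \neq 4$, at least one of (a), (b) fails, and for (ii) I would first check that both hold for $K_4$ and then extract the extremal data.

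For part (i), condition (b) rearranges (clearing denominators in $a(m-a)=\tfrac14 m(m-1)$) to $(m-2a)^2 = m$, so (b) holds if and only if $m$ is a perfect square. Hence for every non-square $m$ in the range, (b) already fails and $K_m$ is not balanceable. It remains to treat $m = s^2$ with $s \ge 3$ (the case $s = 2$ is precisely the excluded $m=4$, and $s=1$ is trivial), where (b) now holds and I must show (a) fails. Writing $m = s^2$, the equation $\binom{w}{2} = \tfrac12\binom{s^2}{2}$ becomes, after clearing denominators, $(2w-1)^2 = 2s^4 - 2s^2 + 1$; equivalently, setting $X = 2s^2 - 1$ and $Y = 2w-1$, the negative Pell equation $X^2 - 2Y^2 = -1$ together with the requirement that $(X+1)/2 = s^2$ be a perfect square. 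The crux of part (i) is therefore the Diophantine statement that $Y^2 = 2s^4 - 2s^2 + 1$ has no integer solution with $s \ge 3$; equivalently, among the solutions $X = 1, 7, 41, 239, \dots$ of $X^2 - 2Y^2 = -1$, only $X = 1$ and $X = 7$ give $(X+1)/2 \in \{1,4\}$ a perfect square. I would establish this either by the classical determination of the integral points on this genus-one quartic or by a descent on the Pell sequence. This is the main obstacle of part (i): no congruence obstruction is available (the right-hand side is a quadratic residue to small moduli), so a genuine finiteness argument is required.

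For part (ii), the balanceability of $K_4$ is immediate from Corollary~\ref{cor:char-BP}: taking $W$ to be the vertex set of a $K_3$ gives $e(K_4[W]) = 3$, and the partition $|X| = 1$, $|Y| = 3$ gives $e(X,Y) = 3 = \tfrac12\binom{4}{2}$. The real content is the value of ${\rm bal}(n,K_4)$ and the family ${\rm Bal}(n,K_4)$. Since a balanced $K_4$ is a $(3,3)$-colored $K_4$, a pattern symmetric in the two colors, a coloring $E(K_n) = E(R)\cup E(B)$ contains a balanced $K_4$ if and only if some four vertices span exactly three red edges. Thus ${\rm bal}(n,K_4)$ equals the maximum number of edges in an $n$-vertex graph no four vertices of which span exactly three edges, and ${\rm Bal}(n,K_4)$ is the corresponding extremal family. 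For the lower bound I would exhibit $H = J \cup \bigcup_{i=1}^{q} C_4$ and check directly that any four vertices of a disjoint union of $4$-cycles span $0$, $1$, $2$, or $4$ edges, never $3$: four vertices inside one $C_4$ span $4$ edges; three inside one $C_4$ span a path and hence $2$; and any distribution of the four vertices across at least two components totals at most $2$. The small remainder $J$ adds no $4$-set with exactly three edges. This produces a coloring with $\min\{e(R),e(B)\} = e(H)$ and no balanced $K_4$, giving ${\rm bal}(n,K_4) \ge e(H)$, and a routine count of $e(H) = 4q + e(J)$ with $q = \lfloor n/4\rfloor$ yields $n$ when $4 \mid n$ and $n-1$ otherwise.

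The hard direction, and the main obstacle of part (ii), is the matching upper bound together with the uniqueness ${\rm Bal}(n,K_4) = \{H\}$: every $n$-vertex graph $R$ with $e(R) \ge e(H)+1$ must contain four vertices spanning exactly three edges, and equality with no such $4$-set must force $R \cong H$. I would attack this by a local structural analysis of a graph $R$ in which no $4$-set spans exactly three edges. Fixing a cherry $v\,u\,w$ (two edges meeting at $u$) and testing every remaining vertex $x$ shows that the number of edges among $\{vw, vx, wx, ux\}$ can never equal $1$; exploiting this rigidity rules out triangles, claws, paths of length three, and longer or chorded cycles, and forces each component of $R$ to be an isolated vertex, an edge, a path of length two, or a $4$-cycle. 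Since $C_4$ is the densest admissible component, the edge count is maximized by using $\lfloor n/4\rfloor$ disjoint $4$-cycles and filling the remaining $0,1,2$, or $3$ vertices with the densest admissible small graph, namely $J \in \{\emptyset, K_1, K_2, P_2\}$ according to $n \bmod 4$. Converting this structural description into the exact extremal count and the uniqueness of $H$ is where the bulk of the work lies; once it is in place, the arithmetic matching $e(H)$ to $n$ or $n-1$ is the routine step already indicated.
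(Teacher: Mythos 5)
You should first note that this paper contains no proof of Theorem \ref{thm:Km} to compare against: the statement is imported verbatim from \cite{CHM1,CHM2}. Your reduction through Corollary \ref{cor:char-BP} is nonetheless essentially the route the cited sources take (non-balanceability is witnessed on balanced type-$A$ and type-$B$ colorings, which is exactly what the corollary distills), and your arithmetic is correct as far as it goes: condition (b) is equivalent to $(m-2a)^2 = m$, hence to $m$ being a perfect square, and for $m = s^2$ condition (a) is equivalent to solvability of $Y^2 = 2s^4 - 2s^2 + 1$. The lower-bound construction in (ii) and the count $e(H) = n$ or $n-1$ are also fine. But there are two genuine gaps.

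In part (i), the entire difficulty for square $m \ge 9$ is the Diophantine assertion that $Y^2 = 2s^4-2s^2+1$ has no integer solution with $s \ge 3$ (equivalently, that $1$ and $4$ are the only squares in the Pell family $1, 4, 21, 120, 697, \dots$ of solutions of $2w(w-1) = m(m-1)$). You correctly identify this as the crux and correctly observe that no congruence obstruction will do, but ``I would establish this either by the classical determination of the integral points on this genus-one quartic or by a descent on the Pell sequence'' is a plan, not a proof: neither a descent is carried out nor a precise classical result cited. The statement is true, but establishing it is genuinely nontrivial (Ljunggren/Cohn-type work or an integral-point computation on the associated elliptic curve), and without it your argument proves (i) only for non-square $m$.

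In part (ii), your structural claim is false as stated. The property ``no four vertices span exactly three edges'' is self-complementary (three red edges on four vertices means three blue edges on the same four vertices), and it is satisfied by dense graphs: $K_n$, $K_n$ minus a perfect matching, and the complement of your own extremal graph $H$ all avoid the pattern. In particular, your assertion that the cherry rigidity ``rules out triangles'' and forces every component into $\{K_1, K_2, P_2, C_4\}$ cannot be right unconditionally --- $K_n$ is full of triangles and avoids the pattern, because a triangle plus a fourth vertex adjacent to it spans more than three edges. What is actually true, and what the cited proof must establish, is a dichotomy: an avoiding graph either lies in the sparse family you describe or its complement does, with no avoiding graphs of intermediate density; one then discards the dense branch using $e(R) = \min\{e(R), e(B)\} \le \frac{1}{2}\binom{n}{2}$. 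Proving this dichotomy (equivalently, the edge-count gap between $e(H)$ and $\binom{n}{2} - e(H)$ for avoiding graphs) is the bulk of the work in \cite{CHM1}, and it is exactly the step your sketch skips over by treating the sparse classification as automatic.
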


So Theorem \ref{thm:Km} determines which complete graphs with an even number of edges are balanceable and which are not. To show that $K_m$ is not balanceable for $m \equiv 0,1 \;({\rm mod}\; 4)$, we exhibit infinitely many values of $n$ for which there is a balanced red-blue coloring of $E(K_n)$ without a balanced copy of $K_m$ \cite{CHM2}.\\

We know, from Theorem \ref{thm:treeOT}, that  trees are omnitonal and, therefore,  balanceable. In this section, we determine ${\rm bal}(n,G)$ and  describe ${\rm Bal}(n,G)$ for the cases where $G$ is a star or a path with an even number of edges. However, for the general case that $G$ is a tree, the best upper bound we get emerges as a corollary from the bound we obtain for $\rm{ ot}(n,G)$ (see Section \ref{subs:OTtrees}).


\subsection{Stars}

In this section we determine  ${\rm bal}(n,K_{1,k})$ and  describe ${\rm Bal}(n,K_{1,k})$ for $k\geq 2$ even, and $n$ sufficiently large.

\begin{theorem}\label{thm:BP-stars}
Let $k$ and $n$ be integers with $k \ge 2$ even and such that $n\geq \max\{3,\frac{k^2}{4}+1\}$. Then
\[{\rm bal}(n,K_{1,k}) = \left(\frac{k-2}{2}\right)n-\frac{k^2}{8}+\frac{k}{4},
\]
and ${\rm Bal}(n,K_{1,k})$ contains only one graph, namely  the complete $(\frac{k-2}{2},n-\frac{k-2}{2})$-split graph.
\end{theorem}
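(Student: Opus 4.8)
The plan is to prove matching lower and upper bounds on ${\rm bal}(n,K_{1,k})$ together with uniqueness of the extremal coloring, all organized around one observation: a $2$-coloring $E(K_n)=E(R)\cup E(B)$ contains a balanced copy of $K_{1,k}$ if and only if some vertex $v$ has red-degree $r(v)\ge k/2$ and blue-degree $b(v)\ge k/2$. Writing $p=\frac{k-2}{2}=\frac{k}{2}-1$, a coloring is thus balanced-star-free precisely when every vertex $v$ satisfies $r(v)\le p$ or $b(v)\le p$. Since $r(v)+b(v)=n-1>2p$ for $n$ in the stated range, this partitions $V(K_n)=A\cup B$ disjointly into the set $A$ of \emph{blue-light} vertices ($b(v)\le p$) and the set $B$ of \emph{red-light} vertices ($r(v)\le p$); put $a=|A|$.

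For the lower bound I would exhibit the candidate extremal coloring: let $R$ be the complete $(p,n-p)$-split graph, so $A$ spans a red clique $K_p$, all $A$--$B$ edges are red, and $B$ is red-independent (hence a blue clique). A direct degree check shows every vertex of $A$ has $b(v)=0$ and every vertex of $B$ has $r(v)=p<k/2$, so there is no balanced star; moreover $e(R)=\binom{p}{2}+p(n-p)=\frac{k-2}{2}n-\frac{k^2}{8}+\frac{k}{4}$, which is $\mathcal{O}(n)$ and hence at most $e(B)$ for $n$ in the stated range, giving $\min\{e(R),e(B)\}=e(R)$. This yields ${\rm bal}(n,K_{1,k})\ge e(R)$ and places this split graph in ${\rm Bal}(n,K_{1,k})$.

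The heart of the argument is the matching upper bound, and the main obstacle is that the naive estimate $e(R)\le\binom{a}{2}+(n-a)p$ is \emph{too weak}: it actually exceeds the target, so one must exploit the degree constraints on \emph{both} sides via the crossing edges. Counting $E(A,B)$ two ways gives the key coupling inequality: the blue edges between $A$ and $B$ number at most $\sum_{v\in A}b(v)\le ap$ and the red ones at most $\sum_{v\in B}r(v)\le (n-a)p$, so $a(n-a)=e(A,B)\le np$. I would use this coupling twice. First, for $n>k^2/4$ it forces $\min\{a,n-a\}\le p$, since $a,n-a\ge p+1$ would give $a(n-a)\ge(p+1)(n-p-1)>np$. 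Second, assuming without loss of generality $a\le p$ (the case $n-a\le p$ being symmetric under swapping colors), I would write $c$ for the number of red edges between $A$ and $B$ and bound $e(R)=e(R[A])+c+e(R[B])$ by combining $e(R[A])\le\binom{a}{2}$, $c\le a(n-a)$, and $2e(R[B])+c\le(n-a)p$ to obtain $e(R)\le\binom{a}{2}+\tfrac12 a(n-a)+\tfrac12(n-a)p=\tfrac12\big(np+a(n-1-p)\big)$. As this expression is increasing in $a$ and $a\le p$, it is at most its value at $a=p$, which equals $e(R)$ of the construction; hence $\min\{e(R),e(B)\}\le e(R)\le \frac{k-2}{2}n-\frac{k^2}{8}+\frac{k}{4}$.

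Finally, for uniqueness I would trace the equality conditions. Attaining $\min=\frac{k-2}{2}n-\frac{k^2}{8}+\frac{k}{4}$ with $a\le p$ forces the increasing bound to be tight at $a=p$, whereupon each of the three inequalities must hold with equality: $e(R[A])=\binom{a}{2}$ (so $A$ is a red clique), $c=a(n-a)$ (all crossing edges red), and $2e(R[B])+c=(n-a)p$ with $c=p(n-p)$ forcing $e(R[B])=0$ (so $B$ is red-independent). This says exactly $R\cong$ the complete $(p,n-p)$-split graph, while the symmetric case gives $B\cong$ this graph. Therefore ${\rm Bal}(n,K_{1,k})$ consists of this single graph, completing the proof.
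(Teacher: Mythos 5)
Your proposal is correct and follows essentially the same route as the paper: the no-balanced-star assumption forces the disjoint partition into blue-light and red-light vertices (the paper's $V_R$ and $V_B$), your coupling inequality $a(n-a)\le np$ is precisely the paper's two-sided count of the crossing edges $e'(R)$ that rules out $\min\{a,n-a\}\ge \frac{k}{2}$ when $n>\frac{k^2}{4}$, and your bound $2e(R)\le np+a(n-1-p)$, increasing in $a$ with equality analysis at $a=\frac{k-2}{2}$, is algebraically identical to the paper's Case~1 degree-sum estimate together with its uniqueness conclusion $R\cong H$. The only point treated more loosely than in the paper is the feasibility check that $\binom{n}{2}\ge 2h(n,k)+2$ on the whole stated range of $n$ (your ``$\mathcal{O}(n)$, hence at most $e(B)$'' remark), which the paper verifies by an explicit quadratic inequality; this is routine and does hold for $n\ge\max\{3,\frac{k^2}{4}+1\}$.
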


\begin{proof}
Let $$h(n,k):= \left(\frac{k-2}{2}\right)n-\frac{k^2}{8}+\frac{k}{4}.$$ 
First observe that the condition $\min\{e(R), e(B)\} > h(n,k)$ is satisfiable, that is, we need to prove that $e(K_n) = \frac{n(n-1)}{2} \ge 2 h(n,k) + 2$  holds true for all $n \ge \max\{3,\frac{k^2}{4}+1\}$. If $k =2$, then $h(n,k)=0$ and the condition is satisfied for every $n\geq 3=\max\{3,\frac{k^2}{4}+1\}$. If $k \ge 4$, we have to verify that $2 h(n,k) + 2 = n(k-2)-\frac{k^2}{4}+\frac{k}{2}+2 \le \frac{n(n-1)}{2}$. Equivalently,  $n^2-(2k-3)n+\frac{k^2}{2}-k-4 \ge 0$, which is indeed the case for $n \ge \frac{k^2}{4}+1$ and $k \ge 4$.

Let $H$ be the complete $(\frac{k-2}{2},n-\frac{k-2}{2})$-split graph. We first show that $H$ has exactly $h(n,k)$ edges:
\begin{align}
e(H)&=\frac{1}{2}\left(\frac{k-2}{2}\right)\left(\frac{k-2}{2}-1\right)+\left(\frac{k-2}{2}\right)\left(n-\frac{k-2}{2}\right)\nonumber\\
 &= \left(\frac{k-2}{2}\right)n+\left(\frac{k-2}{2}\right)\left(\frac{k-2}{4}-\frac{1}{2}-\frac{k-2}{2}\right)\nonumber\\
 &= \left(\frac{k-2}{2}\right)n+\left(\frac{k}{2}-1\right)\left(\frac{k}{4}-\frac{k}{2}\right)\nonumber\\
 &= \left(\frac{k-2}{2}\right)n-\frac{k^2}{8}+\frac{k}{4}.\nonumber
 \end{align}

Now, observe that any $2$-coloring $E(K_n) = E(R) \cup E(B)$ where $R$ or $B$ is isomorphic to  $H$ contains no balanced copy of $K_{1,k}$. To see this note that, for such a coloring, there are two types of vertices $v\in V(K_n)$, the ones for which $\{deg_{R}(v), deg_{B}(r)\}=\{0,n-1\}$, and the ones for which $\{deg_{R}(v),deg_{B}(v)\}=\{\frac{k}{2}-1,n-\frac{k}{2}\}$. In any case, it is imposible to have a balanced $K_{1,k}$.

So far, we have proved that ${\rm bal}(n,K_{1,k}) \geq h(n,k)$ and that $H\in {\rm Bal}(n,K_{1,k})$. To prove that ${\rm bal}(n,K_{1,k}) \leq h(n,k)$ and that ${\rm Bal}(n,K_{1,k})=\{H\}$ we will show that any coloring  $E(K_n) = E(R) \cup E(B)$ with $\min \{e(R), e(B)\} \geq h(n,k)$ and such that $R$ and $B$ are not isomorphic to $H$ contains a balanced copy of $K_{1,k}$. For this purpose, we define the following sets
\[V_R = \left\{v \in V(K_n) \;|\; \deg_R(v) \ge \frac{k}{2}\right\}, \mbox{ and} \]\[V_B =\left\{v \in V(K_n) \;|\; \deg_B(v) \ge \frac{k}{2}\right\}.\]
Let $E(K_n) = E(R) \cup E(B)$ be a coloring with $\min \{e(R), e(B)\} \geq h(n,k)$  and such that $R$ and $B$ are not isomorphic to $H$. If there is a vertex $v\in V_{R}\cap V_B$ then we are done as there would be a balanced  $K_{1,k}$. So we may assume that $V_{R}\cap V_B=\emptyset$. Note that,  since every vertex in $K_n$ has degree $n-1\geq k$ then $V(K_n)=V_{R}\cup V_B$, hence
\begin{equation}\label{eq:n}
|V_{R}|+|V_B|=n.
\end{equation}
Assume  without lost of generality that $|V_{R}|\leq |V_{B}|$. \\

\noindent
\emph{Case 1:} Suppose $|V_{R}|\leq \frac{k}{2}-1$. Thus, 
\begin{align}\label{eq:c1}
2e(R)=\sum_{v\in V(K_n)} deg_{R}(v)&=\sum_{v\in V_{R}} deg_{R}(v) \,+ \sum_{v\in V_{B}} deg_{R}(x)\nonumber\\
&\leq |V_R| (n-1) + |V_B|\left(\frac{k-2}{2} \right)\nonumber\\
&= |V_R| (n-1) + (n-|V_R|)\left(\frac{k-2}{2} \right)\nonumber\\
 &\leq \left(\frac{k}{2}-1\right)(n-1)+\left(n-\frac{k}{2}+1\right)\left(\frac{k-2}{2}\right)\nonumber\\
 &= 2\left(\frac{k-2}{2}\right)n-\frac{k^2}{4}+\frac{k}{2}=2h(n,k).
 \end{align}
Consequently, $e(R)\leq h(n,k)$. By assumption we know that $\min \{e(R), e(B)\} \geq h(n,k)$, so we must  have $e(R)= h(n,k)$. Looking back to the inequalities  in (\ref{eq:c1}), it  must be that $|V_R|=\left(\frac{k}{2}-1\right)$ and $R$ is isomorphic to $H$, a contradiction to our assumption.  \\

\noindent
\emph{Case 2:} Suppose know that $|V_R|\geq\frac{k}{2}$. 
Denote by $e'(R)$ the number of red edges  between $V_{R}$ and $V_B$. Since a vertex $v\in V_R$ satisfies $deg_{B}(v)<\frac{k}{2}$ then  each vertex in $V_R$ contributes to $e'(R)$ with at least $|V_{B}|-\frac{k}{2}+1$ edges, thus  
\begin{equation}\label{eq:e1lb}
e'(R)\geq |V_R|\left(|V_{B}|-\frac{k}{2}+1\right) \geq \frac{k}{2}\left(|V_{B}|-\frac{k}{2}+1\right).
\end{equation} 
On the other hand, each vertex in $V_{B}$ contributes to $e'(R)$ with no more than $\frac{k}{2}$ edges, so that
\begin{equation}\label{eq:e1ub}
e'(R)\leq \left(\frac{k}{2}-1\right)|V_{B}|.
\end{equation}
Now, from (\ref{eq:e1lb}) and (\ref{eq:e1ub}), we obtain $$\frac{k}{2}\left(|V_{B}|-\frac{k}{2}+1\right)\leq  \left(\frac{k}{2}-1\right)|V_{B}|,$$  
from which, by means of (\ref{eq:n}) and the assumption that $|V_R|\geq\frac{k}{2}$, it follows that
\begin{equation}\label{eq:v1v-1}
-\frac{k^2}{4}+\frac{k}{2} \leq -|V_{B}|=|V_{R}|-n\leq \frac{k}{2}-n.
\end{equation}
This yields $n\leq \frac{k^2}{4}$, a contradiction to the hypothesis.
\end{proof}

\subsection{Paths}

In this section we determine  ${\rm bal}(n,P_{k})$ and  describe ${\rm Bal}(n,P_{k})$ for $k\geq 2$ even and $n$ sufficiently large.

\begin{theorem}\label{thm:BP-paths}
Let $k \ge 2$ and $n$ be integers with $k$ even and such that $n \geq \frac{9}{32}k^2+\frac{1}{4}k+1$. Then 
\begin{align*}
{\rm bal}(n,P_k) = \left \{\begin{array}{ll}
  \left(\frac{k-2}{4}\right)n-\frac{k^2}{32}+\frac{1}{8}, & \mbox{for }  k\equiv 2  \mbox{ (mod $4$),}\\
\left(\frac{k-4}{4}\right)n-\frac{k^2}{32}+\frac{k}{8}+1, & \mbox{for } k\equiv 0  \mbox{ (mod $4$),} 
\end{array}\right.
\end{align*}
and ${\rm Bal}(n,P_k)$ contains only one graph, namely  the complete  $\left(\frac{k-2}{4},n-\frac{k-2}{4}\right)$-split graph, if $k\equiv 2$  (mod $4$), and the complete $(\frac{k-4}{4},n-\frac{k-4}{4})$-split graph plus one edge, if $k\equiv 0$ (mod $4$).
\end{theorem}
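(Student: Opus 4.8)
The plan is to mimic the structure of the proof of Theorem~\ref{thm:BP-stars}, treating the two residue classes of $k$ modulo $4$ separately but with a common backbone. First I would identify the conjectured extremal coloring and verify that it contains no balanced $P_k$, which establishes the lower bound ${\rm bal}(n,P_k)\ge {\rm bal}(n,P_k)$-value and shows the named split graph lies in ${\rm Bal}(n,P_k)$. The key observation is that a balanced $P_k$ (for $k$ even) needs exactly $k/2$ red and $k/2$ blue edges, and in a complete $(p,n-p)$-split coloring every path alternates poorly: edges inside the clique are one color and edges to the independent set are the other, so the number of minority-color edges a path can accumulate is tightly bounded by how often it can enter and leave the clique part. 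I would compute, for the complete $(\frac{k-2}{4},n-\frac{k-2}{4})$-split graph (and its $+1$-edge variant when $k\equiv 0$), the maximum possible count of clique-edges along any path of length $k$, and check this falls short of $k/2$, so no balanced copy exists. The edge count of these split graphs, analogous to the computation $e(H)=h(n,k)$ in Theorem~\ref{thm:BP-stars}, gives the stated formula.

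For the upper bound I would argue by contradiction: assume a coloring $E(K_n)=E(R)\cup E(B)$ with $\min\{e(R),e(B)\}\ge {\rm bal}(n,P_k)$ in which $R,B$ are not the extremal graph, and produce a balanced $P_k$. The natural tool is a degree/partition argument: define sets of vertices that are ``red-heavy'' and ``blue-heavy'' according to whether $\deg_R(v)$ or $\deg_B(v)$ exceeds an appropriate threshold (roughly $k/4$ rather than the $k/2$ used for stars, since a path visits each vertex with degree at most $2$). Here is where the path structure makes things genuinely harder than the star case: it no longer suffices to find a single high-degree vertex of each color, because a balanced path must \emph{alternate} colors in a controlled way along its $k$ edges. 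I would instead build the balanced path greedily, extending it edge by edge while maintaining the invariant that the current red/blue counts stay within reach of $(k/2,k/2)$, and use the edge-abundance hypothesis to guarantee that at each step a vertex with an available edge of the required color exists.

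The main obstacle I expect is precisely this extension/alternation step: unlike a star, where all edges share the center and the balanced condition reduces to a purely local degree count, a path is a global object and one must ensure that the alternating color demands can be met simultaneously with the requirement that the path remains a simple path (no repeated vertices). I anticipate handling this by a counting argument showing that if the minority color class is large enough, the bipartite-like structure forcing failure must itself be (close to) the extremal split graph; the two cases $k\equiv 2$ and $k\equiv 0\pmod 4$ differ because the parity of $k/2$ changes whether the optimal split part size is $\frac{k-2}{4}$ or $\frac{k-4}{4}$, and the ``plus one edge'' in the latter case reflects one extra clique-internal edge that can be accommodated without creating a balanced path. The delicate routine calculations verifying the threshold inequalities and the requirement $n\ge \frac{9}{32}k^2+\frac14 k+1$ (which guarantees the minority color class can be this large without forcing a balanced path trivially) would follow the template of the chain of inequalities~(\ref{eq:c1})--(\ref{eq:v1v-1}) in Theorem~\ref{thm:BP-stars}, suitably adapted to the path setting.
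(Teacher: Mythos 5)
Your lower-bound half is sound and coincides with the paper's: you identify the complete $\left(\left\lfloor\frac{k-2}{4}\right\rfloor,\,n-\left\lfloor\frac{k-2}{4}\right\rfloor\right)$-split coloring (plus one edge when $k\equiv 0\pmod 4$), check it has exactly $h(n,k)$ edges, and observe that every red edge meets the clique part, so a simple path picks up at most two red edges per clique vertex, for a total of at most $\frac{k}{2}-1 < \frac{k}{2}$, hence no balanced $P_k$.

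The upper bound, however, has a genuine gap. Your plan --- classify vertices by red/blue degree against a threshold of roughly $k/4$, then grow a balanced path greedily using edge abundance to supply an edge of the needed color at each step --- fails for a concrete reason: in the extremal split coloring itself, \emph{every} vertex has red degree at least $\frac{k-2}{4}$ and blue degree close to $n$, so local color availability is never the obstruction. No greedy invariant can be maintained, because the obstruction is global: all red edges are incident to a set of only $\frac{k-2}{4}$ vertices, and any simple path saturates that set after collecting $\frac{k}{2}-1$ red edges regardless of how it extends. So a correct proof must show that a coloring with $\min\{e(R),e(B)\}\ge h(n,k)$ and no balanced $P_k$ forces $R$ or $B$ to be \emph{exactly} the split graph $H$; that structural step, which you only gesture at (``a counting argument showing that the structure forcing failure must be the extremal split graph''), is the entire difficulty, and your sketch contains no mechanism for it. The paper supplies one by induction on $k$: since $h(n,k-2)<h(n,k)$, there is a balanced $(k-2)$-path $P=x_1\cdots x_{k-1}$, and, assuming no balanced $k$-path exists, a chain of path-rerouting (switching) claims is proved --- all edges from the leftover set $U$ to $\{x_1,x_{k-1}\}$ have one color, $E(U)$ is monochromatic, every internal vertex sending a red edge to $U$ has both incident path edges red, two such vertices are never consecutive, $x_1x_{k-1}$ is blue, and every red chord meets the resulting set $W$. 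A count then shows $|W|=\left\lfloor\frac{k-2}{4}\right\rfloor$ and that all red edges touch $W$ (plus at most one when $k\equiv 0\pmod 4$), giving $e(R)\le h(n,k)$ with equality only for $R\cong H$. Note also that the hypothesis $n\ge\frac{9}{32}k^2+\frac{1}{4}k+1$ is used precisely in the counting step that bounds $|W|$ from below, not, as you suggest, merely to make the minority color class large enough to be satisfiable. Without the inductive balanced-$(k-2)$-path scaffold and the switching arguments, your proposal as written does not go through.
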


\begin{proof}
Let
\begin{align*}
h(n,k) := \left \{\begin{array}{ll}
  \left(\frac{k-2}{4}\right)n-\frac{k^2}{32}+\frac{1}{8}, & \mbox{for }  k\equiv 2  \mbox{ (mod $4$),}\\
\left(\frac{k-4}{4}\right)n-\frac{k^2}{32}+\frac{k}{8}+1, & \mbox{for } k\equiv 0  \mbox{ (mod $4$),} 
\end{array}\right.
\end{align*}

First observe that the condition $\min\{e(R), e(B)\} > h(n,k)$ is satisfiable, that is, we need to prove that $e(K_n) = \frac{n(n-1)}{2} \ge 2 h(n,k) + 2$  holds true for all  $n \geq \frac{9}{32}k^2+\frac{1}{4}k+1$.
If $k =2$, then $h(n,k)=0$ and the condition is satisfied for every $n\geq 3$. Since $h(n,k)\leq \left(\frac{k-2}{4}\right)n-\frac{k^2}{32}+\frac{k}{8}+1$ we have to verify, for $k \ge 4$, that $ \left(\frac{k-2}{2}\right)n-\frac{k^2}{16}+\frac{k}{4}+4\leq \frac{n(n-1)}{2}.$
Equivalently,  $n^2-n(k-1)+\frac{k^2}{8}-\frac{k}{2}-8 \ge 0$, which is indeed the case for $n \geq \frac{9}{32}k^2+\frac{1}{4}k+1$ and $k \ge 4$.

Let $H$ be the complete  $(\left\lfloor\frac{k-2}{4}\right\rfloor,n-\left\lfloor\frac{k-2}{4}\right\rfloor)$-split graph, plus one edge if $k\equiv 0$ (mod $4$).
 We first show that $H$ has exactly $h(n,k)$ edges. If  $k\equiv 2$ (mod $4$), we get

\begin{align*}
 e(H) &=\frac{k-2}{4}\left(n-\frac{k-2}{4}\right)+\frac{1}{2}\left(\frac{k-2}{4}\right)\left(\frac{k-2}{4}-1\right)\\
 &= \left(\frac{k-2}{4}\right)n -\frac{(k-2)^2}{16}+\frac{1}{2}\frac{(k-2)^2}{16}-\frac{(k-2)}{8}\\
&=\left(\frac{k-2}{4}\right)n -\frac{(k-2)^2}{32}-\frac{(k-2)}{8}\\
&=  \left(\frac{k-2}{4}\right)n-\frac{k^2}{32}+\frac{1}{8}=h(n,k).
\end{align*}

On the other hand, we obtain, for  $k\equiv 0$ (mod $4$),
\begin{align*}
 e(H) &=\frac{k-4}{4}\left(n-\frac{k-4}{4}\right)+\frac{1}{2}\left(\frac{k-4}{4}\right)\left(\frac{k-4}{4}-1\right)+1\\
&= \left(\frac{k-4}{4}\right)n -\frac{(k-4)^2}{16}+\frac{1}{2}\frac{(k-4)^2}{16}-\frac{(k-4)}{8}+1\\
&=\left(\frac{k-4}{4}\right)n -\frac{(k-4)^2}{32}-\frac{(k-4)}{8}+1\\
&=  \left(\frac{k-4}{4}\right)n-\frac{k^2}{32}+\frac{k}{8}+1=h(n,k).
\end{align*}

Now, we show that any $2$-coloring $E(K_n) = E(R) \cup E(B)$ with $\min \{e(R), e(B)\} = h(n,k)$ where $R$ or $B$ is isomorphic to  $H$ contains no balanced copy of $P_k$. Suppose without lost of generality that $R$ is the one isomorphic to $H$. Let $V(K_n)=V_1\cup V_2$ be a partition such that all edges induced by $V_2$, minus one if $k\equiv 0$ (mod $4$), are blue and all remaining edges are red.  A balanced copy of $P_k$ must contains $\frac{k}{2}$ edges of each color;  if $k\equiv 2$ (mod $4$) then $|V_1|=\frac{k-2}{4}$, hence, the maximal number of red edges that a path can contain is $2\left(\frac{k-2}{4}\right)=\frac{k}{2}-1$; if $k\equiv 0$ (mod $4$) then $|V_1|=\frac{k-4}{4}$ and we have and extra red edge in $V_2$, hence the maximal number of red edges that a path can contain is $2\left(\frac{k-4}{4}\right)+1=\frac{k}{2}-1$ Thus, no coloring where $R$ or $B$ is isomorphic to  $H$  can have a balanced $P_k$.

So far, we have proved that ${\rm bal}(n,P_k) \geq h(n,k)$ and that $H\in {\rm Bal}(n,P_k)$. To prove that ${\rm bal}(n,P_k) \leq h(n,k)$ and that ${\rm Bal}(n,P_k)=\{H\}$, we will show by induction on $k$ that any coloring  $E(K_n) = E(R) \cup E(B)$ with $\min \{e(R), e(B)\} \geq h(n,k)$ and such that $R$ and $B$ are not isomorphic to $H$ contains a balanced copy of $P_{k}$.

If $k=2$, then $h(n,2)=0$ and $H$ is the complete $(0,n)$-split graph. It is evident that every $2$-coloring of $E(K_n)$, where $n\geq3$, with at least one edge of each color contains a balanced $P_2$. 

Let $k \ge 4$ and assume that the theorem is valid for $k-2$.  Let $n \geq \frac{9}{32}k^2+\frac{1}{4}k+1$, and consider  $E(K_n) = E(R) \cup E(B)$ with $\min\{e(R) ,e(B)\}\geq h(n,k)$ and such that $R$ and $B$ are not isomorphic to $H$. Since $h(n,k-2)<h(n,k)$ for every $k \ge 4$  and every $n \ge 1$ then, by the induction hypothesis, there exists a  balanced $(k-2)$-path,  say $P = x_1x_2 \ldots x_{k-1}$. Let $U = V(K_n) \setminus V(P)$. Since $n \geq \frac{9}{32}k^2+\frac{1}{4}k+1$, we have $|U|  \ge 2$. Suppose for contradiction that there is no balanced $k$-path. Next, we will analyze the edges from $U$ to $ \{x_1,x_{k-1}\}$ and the edges induced by vertices in $U$.\\

\noindent
 \textit{Claim 1. The edges in $(U, \{x_1,x_{k-1}\})$ are all of the same color.} \\
Suppose there are two edges $x_1u$, $x_{k-1}v$ with $u, v \in U$, $u \neq v$, such that one is  blue and one is red. Then $uPv$ is a balanced $k$-path, a contradiction. $\diamond$\\

In the following, we will assume, without lost of generality, that all edges from $U$ to $ \{x_1,x_{k-1}\}$ are blue. Then,\\

\noindent
 \textit{Claim 2. All edges in E(U) are blue.}\\
Suppose there is a red $uv \in E(U)$. Then, due to Claim 1, $uvP$ is a balanced $k$-path, a contradiction. $\diamond$\\

Let $X = \{x_2,...,x_{k-2}\}$. We will analyze the edges contained in $E(U,X)$. From  Claims 1 and 2 we know that all red edges  in $E(K_n)$ are incident to a vertex in $X$. Let $W\subseteq X$ be the set of vertices that are incident to at least one red edge from  $E(U,X)$. We will call a vertex  $x_i\in  X$ a \emph{red vertex}, if both $x_{i-1}x_i$ and $x_ix_{i+1}$ are red edges. \\

\noindent
 \textit{Claim 3. All vertices in $W$ are red vertices.}\\
Suppose to the contrary that there is a vertex $x_i\in W$ such that $x_{i-1}x_i$ is a blue edge. Since $x_i\in W$, there is a red edge $x_iu$ for some $u\in U$.  Now take $v\in U\setminus \{u\}$ and note that  $vx_{k-1}x_{k-2}\ldots x_iux_1x_2 \ldots x_{i-1}$ is a balanced $k$-path, a contradiction. Hence, $x_{i-1}x_i$ is a  red edge. By a symmetric argument, we can conclude that $x_ix_{i+1}$ must be also red. $\diamond$\\

\noindent
\textit{Claim 4. If $x_i\in W$, then then all edges in $E(U, \{x_{i-1},x_{i+1}\})$ are  blue.}\\
Suppose to the contrary that  $x_{i-1}u$ is a red edge for some $u\in U$. By Claim 3 we know that $x_{i-1}x_i$ is also a red edge. If $ux_i$ is red, take $v\in U$ such that $v\neq u$ and note that  $vx_1\ldots  x_{i-1}ux_{i}\ldots  x_{k-1}$ is a balanced $k$-path, a contradiction. If  $ux_i$ is not red then, since $x_i\in W$, we know that there is a vertex $v\in U$, $v\neq u$, such that $vx_i$ is a red edge.  In this case note that  $x_1\ldots  x_{i-1}uvx_{i+1}\ldots  x_{k-1}$ is a balanced $k$-path, a contradiction. Hence, all edges from $x_{i-1}$ to $U$ are blue edges. By a symmetric argument, we can conclude that all edges from $x_{i+1}$ to $U$ must be also blue. $\diamond$\\

\noindent
 \textit{Claim 5. $|W| =\left\lfloor\frac{k-2}{4}\right\rfloor$ and all red edges of $P$, with exception of one if $k\equiv 0$ (mod $4$), are incident with a vertex in $W$.}\\
By Claim 3, we know that $W$ contains only  red vertices, and, by  Claim 4, we conclude that $W$ contains no consecutive  red vertices. Hence, since $P$ contains exactly $\frac{k-2}{2}$ red edges, $|W|\leq \left\lfloor\frac{k-2}{4}\right\rfloor$. Now suppose for contradiction that $|W| \le \left\lfloor\frac{k-2}{4}\right\rfloor - 1$. Then, considering that all edges from $W$ to $U$ and, with exception of the $\frac{k-2}{2}$ blue edges on $P$, all edges induced by $V(P)$ may be  red edges, we have at most the following number of red edges:
\begin{align*}
e(R) &\leq \left(\left\lfloor\frac{k-2}{4}\right\rfloor-1\right)\left(n-(k-1)\right)+\frac{(k-1)(k-2)}{2} - \frac{k-2}{2}\\
& = \left\lfloor\frac{k-2}{4}\right\rfloor n-(k-1)\left\lfloor\frac{k-2}{4}\right\rfloor -n +k-1+\frac{(k-2)^2}{2}.
\end{align*}

On the other hand, we know by hypothesis that
 \begin{equation} \left\lfloor\frac{k-2}{4}\right\rfloor n-\frac{k^2}{32} \leq h(n,k)\leq e(R).
\end{equation}
Thus, 
$$\left\lfloor\frac{k-2}{4}\right\rfloor n-\frac{k^2}{32} 
\leq e(R) \leq \left\lfloor\frac{k-2}{4}\right\rfloor n-(k-1)\left\lfloor\frac{k-2}{4}\right\rfloor -n +k-1+\frac{(k-2)^2}{2}.$$ 

This gives, together with the inequalities $\lfloor \frac{k-2}{4}\rfloor \ge \frac{k-4}{4}$ that 
\begin{align*}
n &\leq \frac{k^2}{32} -(k-1)\left\lfloor\frac{k-2}{4}\right\rfloor  +k-1+\frac{(k-2)^2}{2}\\
 &\leq \frac{k^2}{32}  - \frac{(k-1)(k-4)}{4} +k-1 +\frac{(k-2)^2}{2}\\
 &= \frac{k^2}{32}  - \frac{k^2-5k+4}{4} +k-1+ \frac{k^2-4k+4}{2}\\
&=\frac{9}{32}k^2 + \frac{1}{4} k,
\end{align*}
a contradiction to the assumption that $n \geq \frac{9}{32}k^2+\frac{1}{4}k+1$.

Note that, to achieve $|W| = \lfloor\frac{k-2}{4} \rfloor$, all red edges from $P$, with exception of one if $k\equiv 0$ (mod $4$), appear in pairs surrounding a vertex from $W$. Therefore, if  $k\equiv 2$ (mod $4$) then all red edges in $P$ are incident with a vertex in $W$ and, if  $k\equiv 0$ (mod $4$) then all red edges except one are incident with a vertex in $W$. Our purpose now is to prove that  the remaining red edges induced by $V(P)$ are all incident with a vertex in $W$. \\

\noindent
 \textit{Claim 6. $x_1x_{k-1}$ is a blue edge.}\\ 
 Suppose that $x_{k-1}x_1$ is red. Then take a blue edge $x_ix_{i+1}$ in $P$ and two vertices $u,v\in U$, $u\neq v$. By Claim 3, we know that $ux_i$ and $vx_{i+1}$ are blue edges, then  $vx_{i+1}\ldots x_{k-1}x_{1}\ldots x_{i}u$ is a balanced $k$-path, a contradiction. $\diamond$\\

\noindent
\textit{Claim 7. If $x_ix_j$ is a red edge for some $1\leq i<j\leq k-1$, $j\neq i+1$, then either $x_i$ or $x_j$ is in $W$.}\\ 
Suppose for contradiction that neither $x_i$ nor  $x_j$ belong to $W$. We will prove the existence of a balanced $k$-path. Consider  $P'=ux_{i+1} \ldots x_jx_i \ldots x_1,x_{k-1} \ldots x_{j+1}v.$ Observe that $$E(P')=\left(E(P)\setminus\{x_ix_{i+1},x_jx_{j+1}\}\right) \cup \{ux_{i+1},vx_{j+1},x_1x_{k-1},x_ix_j\},$$ where $x_1x_{k-1}$ is a blue edge, and $x_ix_j$ is a red edge. Thus, in order to show that $P'$ is a balanced $k$-path, it remains to see that $x_ix_{i+1}$ and $ux_{i+1}$ are edges of the same color as well as $x_jx_{j+1}$ and $vx_{j+1}$. If $x_ix_{i+1}$ and  $x_jx_{j+1}$ are blue, then $ux_{i+1}$ and $vx_{j+1}$ are also blue (by  Claim 3) so we are done. 
Suppose then, without lost of generality,  that $x_ix_{i+1}$ is red. Since $x_i\not \in W$, $x_ix_{i-1}$ must be  blue, which implies that $x_jx_{j-1}$ is red (otherwise, by symmetric arguments we obtain  that the $k$-path $P''=vx_{j-1} \ldots x_ix_j \ldots x_{k-1},x_{1} \ldots x_{i-1}u$ is balanced). Now notice that since $x_j\not \in W$, $x_jx_{j+1}$ must be  blue and the cardinality of $W$ forces that one of $x_{i+1}$ or $x_{j-1}$ belongs to $W$. If $x_{i+1}\in W$, we can choose $u$ such that  $ux_{i+1}$  is a red edge, and we are done; if  $x_{j-1}\in W$, we use the path $P''$ instead of $P'$ to find the balanced $k$-path. In all cases there is a balanced $k$-path wich is a contradiction, and so either $x_i$ or $x_j$ is in $W$. $\diamond$\\

To conclude the proof,  we will count which is the maximum number of possible red edges in $K_n$. Since all edges induced by $W$, and all edges from a vertex in $W$ to a vertex in $V(K_n)\setminus W$, plus one if $k\equiv 0$ (mod $4$),  are the only ones being possibly red, we obtain 

\begin{equation}\label{eq:eR}
e(R) \leq \left\lfloor\frac{k-2}{4}\right\rfloor\left(n-\left\lfloor\frac{k-2}{4}\right\rfloor\right)+\frac{1}{2}\left\lfloor\frac{k-2}{4}\right\rfloor\left(\left\lfloor\frac{k-2}{4}\right\rfloor-1\right)+\epsilon,
\end{equation}
where $\epsilon=0$ if $k\equiv 2$ (mod $4$), and $\epsilon=1$ if $k\equiv 0$ (mod $4$). Note that the right hand of (\ref{eq:eR}) is exactly the number of edges of $H$, that is exactly $h(n,k)$ as shown  at the beginning  of the proof. Since we assume $\min\{e(R) ,e(B)\}\geq h(n,k)$, then $e(R)=h(n,k)$. Moreover, note that $R$ is forced to be isomorphic to $H$, which is a contradiction. So, a balanced $k$-path exist.
\end{proof}


\section{Omnitonal trees}\label{sec:OMNI}

We already know that every omnitonal graph $G$ satisfies ${\rm ot}(n,G) = \mathcal{O}(n^{2-\frac{1}{m}})$, where $m = m(G)$  depends only on $G$ (Corollary \ref{cor:subq}).  We also have seen two classes of graphs which are omnitonal: trees (Theorem \ref{thm:treeOT}) and bipartite amoebas (Theorem \ref{thm:amoebasOT}). Moreover, when a graph $G$ is a bipartite amoeba, Theorem~\ref{thm:amoebasOT} yields  ${\rm ot}(n, G) = {\rm ex}(n, G)$ and ${\rm Ot}(n,G)  = {\rm Ex}(n,G)$. Hence, in particular, we have
\begin{equation}\label{eq:otPk}
{\rm ot}(n, P_k)={\rm ex}(n,P_k)\leq \left( \frac{k-1}{2}\right)n,
\end{equation} where the second inequality is well known \cite{ErGa}). In this  section, we will determine ${\rm ot}(n,G)$ and ${\rm Ot}(n,G)$ for the case that $G$ is a star. We also provide a linear (on $n$) upper bound for ${\rm ot}(n,T)$  where $T$ is a tree. This bound yields naturally an upper bound for ${\rm bal}(n,T)$.

\subsection{Stars}

The following theorem determines ${\rm ot}(n,G)$ and ${\rm Ot}(n,G)$ when $G$ is a star $K_{1,k}$.

\begin{theorem}\label{thm:FS-stars}
Let $n$ and $k$ be positive integers such that $n \ge 4k$. Then
\begin{equation}\label{eq:otSk}
{\rm ot}(n,K_{1,k}) = \left \{\begin{array}{ll}
\left\lfloor \left(\frac{k-1}{2}\right)n\right\rfloor , & \mbox{for }  k \le 3,\\
(k-2)n - \frac{k^2}{2} + \frac{3}{2}k - 1, & \mbox{for } k \ge 4,
\end{array}\right.
\end{equation}
and ${\rm Ot}(n,K_{1,k})$ is the family of graphs containing
\begin{enumerate}
\item the empty graph $\overline{K_n}$, if $k=1$;
\item a disjoint union of $\frac{n}{2}$ $K_2$'s, when $n$ is even, and of $\frac{n-1}{2}$ $K_2$'s and a $K_1$, when $n$ is odd, if $k =2$;
\item a disjoint union of cycles, if $k = 3$;
\item a complete $(k-2,n-k+2)$-split graph, if $k \ge 4$.
\end{enumerate}
\end{theorem}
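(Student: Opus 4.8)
The plan is to convert pattern-realizability into a statement about red and blue degrees and then run an extremal edge count, treating the lower bound, the matching upper bound, and the extremal characterization together. An $(r,b)$-colored copy of $K_{1,k}$ is exactly a vertex $v$ with $\deg_R(v)\ge r$ and $\deg_B(v)\ge b$; since $r+b=k\le n-1$, such a copy exists as soon as the degree bounds hold. Hence a coloring fails to realize the pattern $(r_0,k-r_0)$ if and only if every vertex satisfies $\deg_R(v)\le r_0-1$ or $\deg_B(v)\le k-r_0-1$. Writing $S_R=\{v:\deg_R(v)\le r_0-1\}$ and $S_B=\{v:\deg_B(v)\le k-r_0-1\}$, the hypothesis $n\ge 4k$ forces $S_R\cap S_B=\emptyset$ (otherwise $n-1=\deg_R(v)+\deg_B(v)\le k-2$), so $V(K_n)=S_R\sqcup S_B$. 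I would first dispose of the two monochromatic patterns $r_0\in\{0,k\}$: missing $(k,0)$ merely says $\Delta(R)\le k-1$, so $e(R)\le\lfloor\frac{(k-1)n}{2}\rfloor$ while $e(B)$ is quadratic, giving $\min\{e(R),e(B)\}\le\lfloor\frac{(k-1)n}{2}\rfloor$; the red graphs of maximum size with $\Delta\le k-1$ are exactly the empty graph ($k=1$), a maximum matching ($k=2$) and a disjoint union of cycles ($k=3$), producing items 1--3.

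For an intermediate missing pattern ($1\le r_0\le k-1$) set $a=|S_R|$, $b=|S_B|$, $p=r_0-1$, $q=k-r_0-1$, so $p+q=k-2$. The crux is a count of the $ab$ edges between $S_R$ and $S_B$: each is red (hence incident to $S_R$) or blue (hence incident to $S_B$), so the number of red cross edges is at most $\sum_{v\in S_R}\deg_R(v)\le ap$ and the number of blue cross edges is at most $\sum_{v\in S_B}\deg_B(v)\le bq$, which yields the key inequality $ab\le ap+bq$, i.e. $(a-q)(b-p)\le pq$. If both $a,b\ge k-1$ then $a-q\ge r_0$ and $b-p\ge k-r_0$, so $(a-q)(b-p)\ge r_0(k-r_0)=pq+(k-1)>pq$, a contradiction. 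Thus $\min\{a,b\}\le k-2$, and since $a+b=n\ge 4k$ exactly one side is small.

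Assuming $b=|S_B|\le k-2$ (the case $a\le k-2$ is symmetric under swapping colors), I would split the red edges into those inside $S_B$, inside $S_R$, and crossing, and combine $\sum_{v\in S_R}\deg_R(v)\le ap$ with the trivial bound $ab$ on the number of cross edges to obtain
\[
e(R)\ \le\ \binom{b}{2}+\frac{a(p+b)}{2}.
\]
The naive estimate $\binom{b}{2}+ap$ over-counts the edges inside $S_R$ by a factor of two and is too weak near $b=0$, so the factor-two-corrected bound is essential. Substituting $a=n-b$, this right-hand side is increasing in $p$ and then, with $p=k-2$, increasing in $b$ on $[0,k-2]$, so it is maximized at $b=k-2$, where it equals $(k-2)n-\frac{(k-2)(k-1)}{2}$ --- precisely the edge count of the complete $(k-2,n-k+2)$-split graph. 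As $a=n-b$ is large, $e(B)$ is quadratic, so $\min\{e(R),e(B)\}=e(R)$ is bounded by this value; tracing equality pins down $p=k-2$ (so $r_0=k-1$), $b=k-2$, every edge meeting $S_B$ red and no red edge inside $S_R$, i.e. the minority color is exactly the complete split graph.

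To assemble the result, the complete split construction realizes the bound and misses $(k-1,1)$, giving ${\rm ot}(n,K_{1,k})\ge(k-2)n-\frac{(k-2)(k-1)}{2}$, and the count above gives the reverse. For $k\ge 4$ this exceeds the monochromatic value $\lfloor\frac{(k-1)n}{2}\rfloor$ (the difference is $\frac{k-3}{2}n-\frac{(k-2)(k-1)}{2}$, positive for $n\ge 4k$), so the split graph alone governs both ${\rm ot}$ and ${\rm Ot}$, yielding item 4; for $k\le 3$ the monochromatic bound dominates and returns $\lfloor\frac{(k-1)n}{2}\rfloor$ with the families of items 1--3. The step I expect to be the main obstacle is the cross-edge inequality and its consequence $\min\{a,b\}\le k-2$: this is exactly what excludes the ``balanced'' configuration $a\approx b\approx n/2$, which would otherwise allow quadratically many edges of both colors and wreck the linear bound. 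Getting the exact constant afterward hinges on the factor-two-corrected estimate for $e(R)$ rather than the crude one, and on verifying that $n\ge 4k$ suffices both to separate the two regimes and to force the extremal colorings to be precisely the claimed graphs.
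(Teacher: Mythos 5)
Your proposal is correct, and it follows the paper's overall strategy---translating realizability of an $(r,b)$-pattern into the two degree-threshold sets that partition $V(K_n)$, then doing an extremal edge count---but your execution of the count is genuinely different and more unified. The paper disposes of $k=1,2,3$ by separate ad hoc arguments and, for $k\ge4$, assumes without loss of generality $|B_b|\le|R_r|$ and splits into two subcases: when $|B_b|\le b-1$ it bounds the minority degree sum directly and maximizes a quadratic $g(b)=-b^2+(2n+1)b-2n$, which is where the extremal split graph emerges; when $|B_b|\ge b$ it uses the one-sided cross-edge count $|R_r|(|B_b|-b+1)\le e_R(R_r,B_b)\le |B_b|(r-1)$ followed by a two-variable maximization to reach a strict contradiction. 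You instead prove the symmetric inequality $ab\le ap+bq$, i.e.\ $(a-q)(b-p)\le pq$, valid for every intermediate pattern and every $k\ge2$, which yields $\min\{a,b\}\le k-2$ in one stroke (subsuming both the paper's Subcase 4.2 and its ad hoc handling of the intermediate patterns for $k=2,3$), and then a single factor-two-corrected bound $e(R)\le\binom{b}{2}+\frac{a(p+b)}{2}$ which, after substituting $a=n-b$, becomes linear in $b$ with positive slope $\frac{n-k+1}{2}$, so the value $h(n,k)$ and the uniqueness of the extremal split graph both fall out of a single equality trace; your remark that the naive estimate $\binom{b}{2}+ap$ is too weak near $b=0$ (it gives $n(k-2)>h(n,k)$) is exactly right and the correction is indeed essential. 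What the paper's route buys is that the minority color's degree sum is bounded directly, with no double-counting subtlety, and the extremal configuration is isolated in its own subcase; what yours buys is uniformity (no special intermediate-pattern arguments for small $k$) and a clean closed-form key inequality. Two routine items you gesture at but should still write out: the satisfiability check $\binom{n}{2}\ge 2h(n,k)+2$ for $n\ge4k$ (the paper verifies this explicitly at the start), and the verification that in the small-side case the majority color is quadratically large, so the minimum of the two color classes is indeed the color you bounded.
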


\begin{proof}
Let \begin{align*}
h(n,k) = \left \{\begin{array}{ll}
\left\lfloor (\frac{k-1}{2})n\right\rfloor, & \mbox{for }  k \le 3,\\
(k-2)n - \frac{k^2}{2} + \frac{3}{2}k - 1, & \mbox{for } k \ge 4,
\end{array}\right.
\end{align*}
First observe that the condition $\min\{e(R), e(B)\} > h(n,k)$ is satisfiable, that is, we need to prove that $e(K_n) = \frac{n(n-1)}{2} \ge 2 h(n,k) + 2$ is satisfied for $n \ge 4k$. If $k \le 3$, it is easy to check that $2h(n,k) + 2 \le n(k-1) + 2 \le \frac{n(n-1)}{2}$. If $k \ge 4$, we have to verify that $2 h(n,k) + 2 = 2n(k-2) - k^2 + 3k \le \frac{n(n-1)}{2}$, which is equivalent to $n^2 -(4k-7)n +2k^2-6k \ge 0$. This is indeed the case for $n \ge 4k$, as we have
\begin{align*}
n^2 -(4k-7)n +2k^2-6k&= \left( n - \frac{4k-7}{2}\right)^2 - \left(\frac{4k-7}{2}\right)^2 + 2k^2 - 6k\\
& \ge \left( 4k - \frac{4k-7}{2}\right)^2 - \left(2k-\frac{7}{2}\right)^2 + 2k^2 - 6k\\
& = \left(2k + \frac{7}{2} \right)^2 - \left(2k - \frac{7}{2} \right)^2+ 2k^2 - 6k\\
& = 28k+ 2k^2 - 6k = 22k+2k^2 \ge 0.
\end{align*}
Next, observe that the colorings described in items 1--3 contains no $(r,b)$-colored copy of $K_{1,k}$ for some pair $(r,b)\in\{(0,k), (k,0)\}$ (that is, there is no blue or red copy of $K_{1,k}$). The coloring of item 4 does not contain a $K_{1,k}$ with $k-1$ blue edges and one red edge or the other way around.  

Now let $E(K_n) = E(R) \cup E(B)$ be a $2$-coloring with $\min \{e(R), e(B)\} \ge h(n,k)$ and such that $R$ and $B$ are not as in items 1--4 from the theorem. We will show that  the so colored $K_n$ contains an $(r, b)$-colored copy of $K_{1,k}$ for every pair $r,b \ge 0$ with $r+b = k$. With this purpose, we define the sets 
\[R_r = \{v \in V(K_n) \;|\; \deg_R(v) \ge r\}, \mbox{ and} \]\[B_b =\{v \in V(K_n) \;|\; \deg_B(v) \ge b\},\]
for integers $b, r \ge 0$ such that $b+r = k$.
If there is a vertex $x \in B_b \cap R_r$ for a pair $b, r \ge 0$ with $b+r = k$, then $x$ is the center of a star $K_{1,k}$ with $b$ blue edges and $r$ red edges. Hence, if $B_b \cap R_r \neq \emptyset$ for every pair $b, r \ge 0$ with $b+r = k$, then $K_n$ contains an $(r, b)$-colored copy of $K_{1,k}$ for every pair $r,b \ge 0$ with $r+b = k$ and we are done. So we may assume that there is a particular pair $b, r \ge 0$ with $b+r = k$ such that $B_b \cap R_r = \emptyset$. Clearly, $V(K_n) \setminus (B_b \cup R_r) = \emptyset$, otherwise there would be a vertex of degree at most $b+r-2=k-2$, which is not possible since every vertex in $K_n$ has degree $n-1 \ge k-1$. Hence, $V(K_n) = B_b \cup R_r$, where the union is disjoint. Observe that $B_0 = R_0 = V(K_n)$ and so, if $b = 0$ and $r = k$, we obtain $R_k = \emptyset$. \\

\noindent
\emph{Case 1: Let $k = 1$.}
Then say $b = 0$ and $r=1$, giving $B_0 = V(K_n)$ and $R_1 = \emptyset$, and thus $R$ is the empty graph, which is not possible by assumption. \\

\noindent
\emph{Case 2: Let $k = 2$.}
Then $\{r,b\} = \{0,2\}$ or $r= b = 1$. Say, in the first case, that $b= 0$ and $r=2$. Then $B_0 = V(K_n)$ and $R_2 = \emptyset$. Then we have
\begin{align*}
2 e(R) \le \left\{
\begin{array}{ll}
n-1, & \mbox{if } n \mbox{ odd},\\
n, & \mbox{if } n \mbox{ even},
\end{array}
\right\} = 2 h(n,2).
\end{align*}
Since by assumption $e(R) \ge h(n,k)$, we obtain equality in the above inequality chain. This is only possible if $R$ is a disjoint union of $\frac{n}{2}$ $K_2$'s, when $n$ is even, and of $\frac{n-1}{2}$ $K_2$'s and a $K_1$, when $n$ is odd, which is not allowed by hypothesis. Hence, $b =r = 1$. Since $\min\{e(R), e(B)\} \ge h(n,k) = \left\lfloor\frac{n}{2}\right\rfloor$, $B_1, R_1 \neq \emptyset$. Thus, $\deg_B(v) = 0$ for all $v \in R_1$, implying that the edges between $B_1$ and $R_1$ are all red. On the other hand, we have also $\deg_R(v) = 0$ for all $v \in B_1$, implying that the edges between $B_1$ and $R_1$ are all blue, a contradiction.\\

\noindent
\emph{Case 3: Let $k = 3$.}
Then $\{r,b\} = \{0,3\}$ or $\{r, b\} = \{1,2\}$. Say, in the first case, that $b= 0$ and $r=3$. Then $B_0 = V(K_n)$ and $R_3 = \emptyset$, and $2 e(R) \le 2n  = 2 h(n,2)$. Since by assumption $e(R) \ge h(n,k)$, it follows that $e(R) = n$ and that all vertices have degree $2$ in $R$, that is, $R$ is a union of cycles, which is not possible by assumption. Thus $\{r, b\} = \{1,2\}$, so say that $b = 1$ and $r=2$. Then $\deg_B(v) = 0$ for all $v \in R_2$, implying that all edges between $B_1$ and $R_2$ are red. But $\deg_R(u) \le 1$ for all $u \in B_1$ and so we infer that $|B_1| \le 1$, which leads us to conclude that there are no blue edges, contradicting the hypothesis $e(B) \ge h(n,3) = n > 0$.\\

\noindent
\emph{Case 4: Let $k \ge 4$.} 
Observe that $B_0 = R_0 = V(K_n)$ and so, if $b = 0$ and $r = k$, we obtain $R_k = \emptyset$, leading to the contradiction $2e(R) \le n(k-1) < 2h(n,k)$. The case $b=k$ and $r=0$ is analogous. Hence, we have $1 \le r, b \le k-1$. If $B_b = \emptyset$, then we would have the same contradiction with
\begin{equation}\label{eq:extremal_case_stars_k<4}
2 e(B) \le n(b-1) \le n(k-1) < 2 h(n,k).
\end{equation}
The same happens if $R_r = \emptyset$. Hence, $B_b , R_r \neq \emptyset$ and, assuming without loss of generality that $|B_b| \le |R_r|$, we have $1 \le |B_b| \le |R_r| \le n-1$. Now we distinguish two cases.\\

\noindent
\emph{Subcase 4.1: Suppose that $|B_b| \le b-1$.} Then we have
\begin{equation}\label{eq:extremal_case_stars_k>3_a}
\begin{array}{ll}
2 e(B) \hspace{-1ex} &= \sum_{v \in R_r} \deg_B(v) + \sum_{v \in B_b} \deg_B(v) \\
& \le (n - |B_b|) (b-1) + |B_b|(n-1)\\
& = |B_b| (n-b) + n(b-1)\\
& \le (b-1)(n-b)+n(b-1) \\
&= -b^2 + (2n+1)b-2n.
\end{array}
\end{equation}

Define the function $g(b) = -b^2 + (2n+1)b-2n$ and observe that $g'(b) = -2b+2n+1 >0$ for $b \in [1,k-1]$. Hence, the maximum of the function $g(b)$ on the domain $[1,k-1]$ is attained when $b = k-1$, and thus
\begin{equation}\label{eq:extremal_case_stars_k>3_b}
\begin{array}{ll}
2 e(B) \hspace{-1ex} &\le -b^2 + (2n+1)b-2n \\
&\le -(k-1)^2 + (2n+1)(k-1)-2n \\
& = 2nk-4n-k^2+3k-2 = 2h(n,k).
\end{array}
\end{equation}
Since, by assumption, $e(B) \le h(n,k)$, we obtain equality all along the inequality chains (\ref{eq:extremal_case_stars_k>3_a}) and (\ref{eq:extremal_case_stars_k>3_b}). This gives us that $b = k-1$, $r=1$, $|B_b| = |B_{k-1}| = b-1 = k-2$, and that each $u \in R_1$ and $v \in B_{k-1}$ have $\deg_B(u) = b-1 = k-2$ and $\deg_B(v) = n-1$. Hence, $B$ is a complete $(k-2,n-k+2)$-split graph, a contradiction to our assumptions.\\

\noindent
\emph{Subcase 4.2: Suppose that $|B_b| \ge b$.}
Considering $e_R(R_r,B_b)$, the number of red edges with one vertex in $R_r$ and one in $B_b$, we have
\begin{eqnarray}\label{ineq:red_edges}
|B_b| (|B_b|-b+1) \le |R_r|(|B_b|-b+1) \le e_R(R_r,B_b) \le |B_b|(r-1).
\end{eqnarray}
In particular, it follows that $|B_b|-b+1 \le r-1$ which is the same as $|B_b| \le r+b-2 = k-2$. Hence, we have $1 \le b \le |B_b| \le k-2$. Moreover, counting the blue edges and using that $deg_B(v) \le b-1$ for every $v \in R_r$ and (\ref{ineq:red_edges}), we obtain
\begin{align*}
2 e(B) &= \sum_{v \in R_r} \deg_B(v) + \sum_{v \in B_b} \deg_B(v) \\
& \le |R_r| (b-1) + |B_b|(n-1) - e_R(R_r,B_b)\\
& \le |R_r| (b-1) + |B_b|(n-1) - |R_r|(|B_b|-b+1)\\
& = |R_r| (-|B_b|+2b-2) + |B_b|(n-1)\\
& = (n - |B_b|) (-|B_b|+2b-2) + |B_b|(n-1)\\
& = |B_b|^2 -2|B_b|b+|B_b| +2nb- 2n.
\end{align*}
Let $g(x,y)$ be the function $g(x,y) = x^2-2xy+x+2ny-2n$. Since $1 \le b \le |B_b| \le k-2$ and by the above inequality chain, $2 e(B) \le g(|B_b|,b)$, we are interested in finding where is the maximum of $g(x,y)$ among the domain $[1,k-2]\times[1,k-2]$ and with the constraint $y \le x$. Observe that the derivatives $\frac{dg}{dx}(x,y) = 2x-2y+1$ and $\frac{dg}{dy}(x,y) = -2x+2n$ are both positive for $(x,y) \in [1,k-2] \times [1,k-2]$ and $y \le x$. So $g(x,y)$ grows in $[1,k-2]\times[1,k-2]$ with $x$ and $y$,  assuming $y \le x$, and the maximum of the function is attained when $x = k-2$ and $y = k-2$. Continuing the computation above, it follows that 
\begin{align*}
2 e(B) & \le |B_b|^2 -2|B_b|b+|B_b| +2nb- 2n = g(|B_b|,b)\\
& \le (k-2)^2 -2(k-2)^2+(k-2)+2n(k-2)-2n\\
& = (k-2)(-(k-2)+1+2n)-2n\\
&= (k-2) (-k+3+2n)-2n\\
&= 2nk-6n-k^2+5k-6 \\
&= 2 \left(n(k-3)-\frac{k^2}{2}+\frac{5}{2}k - 3\right) < 2h(n,k),
\end{align*}
which is a contradiction.\\

Hence, we have shown that $B_b \cap R_r \neq \emptyset$ for every pair $b, r \ge 0$ with $b+r = k$, implying that $K_n$ contains an $(r, b)$-colored copy of $K_{1,k}$ for every pair $r,b \ge 0$ with $r+b = k$. Altogether we have shown that ${\rm ot}(n,K_{1,k}) = h(n,k)$ and that ${\rm Ot}(n,K_{1,k})$ is the family of graphs described in items 1--4.
\end{proof}

\subsection{Trees}\label{subs:OTtrees}
By Theorem \ref{thm:treeOT}, we know that trees are omnitonal. Also, Theorem~\ref{thm:amoebasOT} yields that a tree $T$ which is  an amoeba satisfies ${\rm ot}(n,T) = {\rm ex}(n,T)$, but we also know that not every tree is an amoeba (like stars with at least three leaves). However, we will prove that ${\rm ot}(n,T)$ is linear on $n$ for every tree $t$. More precisely, we will show that more than $(k-1)n$ edges from each color are enough to guarantee the existence of every tree on $k$ edges in all different tonal variations.

In 1962, Erd\H{o}s and S\'os conjecture that the trivial lower bound ${\rm ex}(n,T)\geq n\left(\frac{k-1}{2}\right)$ is tight (see \cite{FuSi}). A proof of this conjecture  for sufficiently large $k$ was announced years ago by Ajtai,  Koml\'os,  Simonovits and  Szemer\'edi, but the proof remains unpublished. For our purpose,  we will use the following weaker statement, which is folklore (see for example \cite{FuSi}). Denote by $\mathcal{T}_k$ the class of trees on $k$ edges. 

\begin{remark}\label{rem}
Let $k$ be a positive integer and let $T\in \mathcal{T}_k$. Then, ${\rm ex}(T,n)<(k-1)n$.
\end{remark}
 
Before stating the theorem, we need one more definition.  In a tree that is not a star, there are at least two vertices such that all but one of its neighbors are leaves. The star induced by such a vertex together with its neighbor-leaves is called an \emph{end-star}. Hence, for every tree $T$ different from a star, there is an end-star vertex $v$ with $\deg(v) \leq \frac{e(T)+1}{2}$.   
 
 \begin{theorem}\label{thm:otTrees}
Let $n$ and $k$ be positive integers such that $n \ge 4k$. Then, for every $T\in \mathcal{T}_k$, ${\rm ot}(n,T) \le (k-1)n$.
\end{theorem}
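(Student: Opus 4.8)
The plan is to prove the statement directly: for $n \ge 4k$, every coloring $E(K_n) = E(R) \cup E(B)$ with $\min\{e(R),e(B)\} > (k-1)n$ admits an $(r,b)$-colored copy of $T$ for all $r+b = k$. Since $(k-1)n > {\rm ex}(n,T)$ by Remark \ref{rem}, both $R$ and $B$ already contain a copy of $T$, so the extreme patterns $(k,0)$ and $(0,k)$ are for free; only the intermediate range $1 \le r \le k-1$ needs work. I would induct on $k$. The base $k=1$ is immediate, and when $T = K_{1,k}$ is a star the bound follows from Theorem \ref{thm:FS-stars}, whose value for ${\rm ot}(n,K_{1,k})$ is at most $(k-1)n$ when $n \ge 4k$. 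So I may assume $T$ is not a star.

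The engine of the induction is an \emph{end-star}: a vertex $v$ of degree $d$ whose neighbours are $d-1$ leaves $\ell_1,\dots,\ell_{d-1}$ and a single non-leaf $u$, chosen (as noted before the theorem) so that $d \le \frac{k+1}{2}$. Deleting $v$ and its leaves leaves a tree $T'' = T - \{v,\ell_1,\dots,\ell_{d-1}\}$ with $e(T'') = k-d \ge 1$, to which the induction applies since $\min\{e(R),e(B)\} > (k-1)n > (k-d-1)n$ and $n \ge 4k \ge 4(k-d)$; thus every pattern $r'' \in \{0,\dots,k-d\}$ is realizable for $T''$. The crucial point is that the end-star condition $d \le \frac{k+1}{2}$ gives $k-d \ge d-1$, hence $[0,k-d] \cup [d,k] = [0,k]$. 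Consequently I never need to split the end-star between the two colors: for a target $r \le k-d$ I realize $T''$ with pattern $(r,k-d-r)$ and hang the \emph{entire} end-star in blue (the edge $u^\ast v^\ast$ and all $d-1$ leaf-edges blue), while for $r \ge d$ I realize $T''$ with pattern $(r-d,k-r)$ and hang the entire end-star in red. In either case the total red count is exactly $r$, and I only need a \emph{monochromatic} attachment vertex $v^\ast$ of the appropriate color. A short counting argument supplies these: writing $A = \{v : \deg_R(v) \ge k\}$ and summing degrees, $2(k-1)n < 2e(R) \le |A|(n-1) + (n-|A|)(k-1)$ forces $|A| > \frac{(k-1)n}{\,n-k\,} \ge k-1$, so at least $k$ vertices have red-degree $\ge k$, and symmetrically at least $k$ have blue-degree $\ge k$.

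To assemble the copy in, say, the case $r \ge d$, I would select a red-rich vertex $v^\ast$ (red-degree $\ge k$), reserve $d-1$ of its red-neighbours as the leaf images and one further red-neighbour as the anchor $u^\ast$, and then embed $T''$ with $u \mapsto u^\ast$ and pattern $(r-d,k-r)$ inside the remaining vertices, disjoint from the $d$ reserved ones; the $d \le k$ reservations are available because $v^\ast$ has $\ge k$ red-neighbours and there are $\ge 3k$ spare vertices. This is exactly the step I expect to be the main obstacle, for two linked reasons: the embedding of $T''$ must be \emph{anchored}, pinning the image of $u$ to the prescribed red-neighbour $u^\ast$, and it must \emph{avoid} a bounded reserved set. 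Neither is delivered by the induction hypothesis as stated, so I would strengthen it to assert that $T''$ realizes every pattern with the image of a designated vertex prescribed and a bounded set of vertices forbidden; the delicate part is checking that this strengthened statement propagates through the recursion (where the pinned vertex $u$ of $T$ need not coincide with the anchor of $T''$'s own end-star), the slack $n \ge 4k$ providing the room to re-route embeddings around the forbidden and pinned vertices. Once this anchored, reservation-respecting form of the induction is in place, the interval-covering identity $[0,k-d]\cup[d,k]=[0,k]$ closes the argument.
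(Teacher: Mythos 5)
Your skeleton---induction on $k$, the star case via Theorem \ref{thm:FS-stars}, an end-star of degree $d\le\frac{k+1}{2}$, and the interval identity $[0,k-d]\cup[d,k]=[0,k]$---matches the paper's proof, but the step you yourself flag as the main obstacle is a genuine gap, and the repair you sketch does not work as stated. Because you insist on attaching the end-star \emph{monochromatically}, including the connector edge $u^*v^*$, you must control the color of an edge joining the star's center to the pinned image of $u$; this is what forces your anchored induction hypothesis (realize every tone of $T''$ with the image of $u$ prescribed at a given vertex $u^*$, avoiding a bounded set). That strengthened statement is false for an arbitrary prescribed anchor: the hypothesis $\min\{e(R),e(B)\}>(k-1)n$ is a global edge count with no per-vertex consequence, so $u^*$---chosen only as a red neighbour of a red-rich $v^*$---may have blue degree $0$ (color all $n-1$ edges at one vertex red and add a red clique elsewhere to push $e(R)$ above $(k-1)n$; $e(B)$ stays far above the threshold for $n\ge 4k$), and then no tone of $T''$ requiring a blue edge at $u$ can be anchored there. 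Your counting argument does produce $\ge k$ red-rich and $\ge k$ blue-rich vertices, but these sets may be disjoint and need not meet the red neighbourhood of $v^*$, so you cannot even guarantee a two-colored anchor, let alone propagate the pinning through the recursion (where, as you note, the pinned vertex collides with the next level's end-star). The avoidance half of your strengthening is, by contrast, harmless: one simply works inside the complement of the reserved set and recounts edges, which is exactly what the paper does.

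The paper closes this hole not by strengthening the induction but by reversing the assembly order and paying one extra leaf. It first embeds $T'$ in tone $(s,k-t-s)$ \emph{anywhere} (plain induction, no pinning), then applies Theorem \ref{thm:FS-stars} inside the leftover vertex set $W_s$---whose restricted coloring still has more than $(t-2)|W_s|$ edges of each color---to obtain a $K_{1,t}$, i.e.\ one leaf more than the $t-1$ that $T$ needs at $v$, in the tones $(t-1,1)$ and $(1,t-1)$. Whatever color the connecting edge $w^*v^*$ turns out to have, one discards from the star a leaf edge of that same color, so the attachment contributes exactly $t-1$ edges of one color and one of the other, deterministically; taking $s=r-t+1$ when $r\ge t-1$ and $s=r-1$ when $r\le t-2$ then covers all tones $1\le r\le k-1$. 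Note the contrast with your design: the paper's attachment is never monochromatic, precisely so that the uncontrolled connector color can be absorbed by the spare leaf, and anchoring never arises. If you wish to salvage your version, you would have to build per-vertex color-richness into the induction hypothesis, which the global hypothesis cannot supply for a prescribed vertex; the spare-leaf trick is the cheaper fix.
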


\begin{proof}
First observe that the condition $\min\{e(R), e(B)\} \geq (k-1)n$ is satisfiable. This is clearly so, since $e(K_n) = \frac{n(n-1)}{2} \ge 2(k-1)n$ is satisfied for $n \geq  4k$. 

We proceed now by induction on $k$. A tree $T$ with $k=1,2,$ or $3$ edges is either a star or a path, and it follows from (\ref{eq:otPk}) and (\ref{eq:otSk}) that  ${\rm ot}(n,T)\leq \left(\frac{k-1}{2}\right)n \leq (k-1)n$, so that the statement holds true in these cases.  For a fix $k\geq 3$, assume that the statement is true for every tree with less than $k$ edges. Let  $T$ be a tree with $k$ edges, let  $n \geq 4k$, and consider a $2$-coloring $E(K_n) = E(R) \cup E(B)$ such that 
\begin{equation}\label{eq:BR}
\min\{e(B),e(R)\} > (k-1)n.
\end{equation} 
Note that,  by Remark \ref{rem}, we get both a $(0,k)$-colored copy, and a $(k,0)$-colored copy of $T$. Thus, we only need to prove that, for every $1\leq r\leq k-1$, there is a $(r,k-r)$-colored copy of $T$ under the  the given coloring. Also note that, if $T$ is a star we are done by Theorem \ref{thm:FS-stars}. Then we assume that $T$ is not a star. By the discussion above the statement of the theorem, there is an end-star vertex $v\in V(T)$ such that $\deg_T(v)=t$ and 
\begin{equation}\label{eq:t}
2\leq t\leq \frac{k+1}{2}.
\end{equation}
Denote by $w$ the only neighbor of $v$ which is not a leaf, and let $T'$ be the tree on $k-t$ edges obtained by deleting $v$ and all its leaf-neighbors from $T$. 
By the induction hypothesis, there is a copy of $T'$ in every tone. For each $0 \le s \le k-t$, let $T'_s$ be an $(s,k-t-s)$-copy of $T'$. Let $W_s = V(K_n) \setminus V(T'_s)$ and let $B^*_s$ and $R^*_s$ be the graphs induced by the blue and, respectively, red edges in $W_s$. Observe that $|W_s| = n - (k-t+1) \ge 4k - (k-t+1) = 3k-t+1 \ge 5t+1 > 4t$. Moreover,
\begin{align*}
\min\{e(B^*_s),e(R^*_s)\} &> (k-1) n - {n(T'_s) \choose 2} - n(T') (n-n(T'_s))\\
& = (k-1)n - \frac{1}{2} (k-t+1)(k-t) - (k-t+1)(n- (k-t+1))\\
& =  ((k-1) - (k-t+1)) (n - (k-t+1)) + (k-t+1) \left((k-1) - \frac{1}{2}(k-t)\right) \\
& =  (t-2) (n - k+t-1) +  \frac{1}{2}  (k-t+1) (k + t-2).
\end{align*}
Since $k \ge 2t -1$ by (\ref{eq:t}), we get $\frac{1}{2}  (k-t+1) (k + t-2) \ge \frac{1}{2}  t (3t-3) > 0$ for $t \ge 2$, and so we can conlcude that 
\[\min\{e(B^*_s),e(R^*_s)\} > (t-2) (n - k+t-1).\]
Hence, by Theorem \ref{thm:FS-stars}, there is a copy of $K_{1,t}$ in $W_s$  in every tone. In particular, there is a $(1,t-1)$ and a $(t-1,1)$ copy of $K_{1,t}$. Now we will show that, for every $1 \le r \le k-1$, there is an $(r,k-r)$-colored copy of $T$ in $K_n$. To this aim, we distinguish two cases.\\

\noindent
\emph{Case 1: $r \ge t-1$.} Set $s = r-t+1$ and consider $T'_s$, the $(s,k-t-s)$-colored copy of $T'$ in $W_s$. Let $w^*$ be the copy of $w$ in $T'_s$. By the discussion above, there is a $(t-1,1)$-colored copy of $K_{1,t}$ in $W_s$ with, say, central vertex $v^*$ and leaves $x_1, x_2, \ldots, x_t$, and such that $vx_i$ is red for $1 \le i \le t-1$, and $v^*x_t$ is blue. In order to find the desired $(r,k-r)$ copy of $T$, proceed as follows:
\begin{itemize}
\item If $w^*v^*$ is red, take the set of vertices $V(T'_s) \cup \{v^*,x_2,\ldots,x_t\}$ and edge set $E(T'_s)\cup \{w^*v^*,v^*x_2, \ldots,v^*x_t\}$.
\item If $w^*v$ is blue, take the set of vertices $V(T_s)\cup  \{v^*,x_1,\ldots,x_{t-1}\}$ and edge set $E(T'_s)\cup \{w^*v^*,v^*x_1, \ldots,v^*x_{t-1}\}$. 
\end{itemize}
In both cases we obtain a copy of $T$ with $s + t-1 = (r-t+1) + t - 1 = r$ red edges and $k-t-s + 1 = k - t - (r-t+1) + 1  = k-r$ blue edges.\\

\noindent
\emph{Case 2: $r \le t-2$.} Set $s = r-1$ and consider $T'_s$, the $(s,k-t-s)$-colored copy of $T'$ in $W_s$. Let $w^*$ be the copy of $w$ in $T'_s$. In this case, we will use a $(1,t-1)$-colored copy of $K_{1,t}$ contained in $W_s$ with, say, central vertex $v^*$ and leaves $x_1, x_2, \ldots, x_t$, and such that $vx_i$ is blue for $1 \le i \le t-1$, and $v^*x_t$ is red. To find the desired $(r,k-r)$ copy of $T$, we do the following:
\begin{itemize}
\item If $w^*v^*$ is red, take the set of vertices $V(T'_s) \cup \{v^*,x_1,\ldots,x_{t-1}\}$ and edge set $E(T'_s)\cup \{w^*v^*,v^*x_1,\ldots,v^*x_{t-1}\}$.
\item If $w^*v$ is blue, take the set of vertices $V(T_s)\cup  \{v^*,x_2, \ldots, x_t \}$ and edge set $E(T'_s) \cup \{ w^*v^*,v^*x_2, \ldots, v^*x_t \}$.
\end{itemize}
In both cases we obtain a copy of $T$ with $s + 1 = (r-1) + 1 = r$ red edges and $(k-t-s) + t- 1 = (k - t - (r-1)) + t -1  = k-r$ blue edges.
\end{proof}

The  bound from Theorem \ref{thm:otTrees} yields a better bound for ${\rm bal}(n,T)$ than the one we get by means of a direct approach using Theorem \ref{thm:BP-stars}. The problem with this approach is that, when considering balanceable graphs, we can deal only with graphs with an even number of edges, and we lose tightness when the subtrees in which we split our tree in the induction step have odd edge number. A better bound may be obtained if the stronger notion of balaceable graphs is considered, as proposed in Footnote \ref{foot:str_bal}.

\begin{corollary}
Let $n$ and $k$ be positive integers such that $n \ge 4k$. Then, for every $T\in \mathcal{T}_k$, ${\rm bal}(n,T) \le (k-1)n$.
\end{corollary}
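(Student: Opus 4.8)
The plan is to observe that the corollary is an immediate consequence of Theorem \ref{thm:otTrees}, since the balanced pattern is merely one distinguished tonal variation among all those that the omnitonal property already delivers. The core idea is that ${\rm bal}(n,T) \le {\rm ot}(n,T)$ always holds, so the linear bound on ${\rm ot}(n,T)$ transfers verbatim.

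First I would recall the definition of ${\rm ot}(n,T)$: any $2$-coloring $E(K_n) = E(R) \cup E(B)$ with $\min\{e(R), e(B)\} > {\rm ot}(n,T)$ contains an $(r,b)$-colored copy of $T$ for \emph{every} pair of non-negative integers $r,b$ with $r+b = e(T) = k$. I would then specialize this to the balanced pattern: taking $r = b = k/2$ when $k$ is even, and $(r,b) = (\lceil k/2 \rceil, \lfloor k/2 \rfloor)$ when $k$ is odd, one obtains precisely a balanced copy of $T$ in the sense of Definition \ref{def:bal}. Since this holds under the hypothesis $\min\{e(R),e(B)\} > {\rm ot}(n,T)$, the same threshold forces a balanced copy, whence ${\rm bal}(n,T) \le {\rm ot}(n,T)$ (this is the general implication ``omnitonal $\Rightarrow$ balanceable'' noted just after the definition of omnitonal graphs, now made quantitative at the level of the thresholds).

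Finally, I would chain this inequality with Theorem \ref{thm:otTrees}, which gives ${\rm ot}(n,T) \le (k-1)n$ for every $T \in \mathcal{T}_k$ whenever $n \ge 4k$, to conclude
\[
{\rm bal}(n,T) \le {\rm ot}(n,T) \le (k-1)n.
\]
There is no genuine obstacle here: the only point requiring a line of justification is the verification that the balanced pattern sits among the $k+1$ tonal patterns guaranteed by omnitonality, which is immediate from the case split on the parity of $k$. The substantive content of the bound resides entirely in Theorem \ref{thm:otTrees}; the corollary simply reads off the balanced specialization.
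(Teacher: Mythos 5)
Your proposal is correct and is exactly the paper's (implicit) argument: the corollary follows from Theorem \ref{thm:otTrees} via the quantitative form of the observation, made right after the definition of omnitonal graphs, that a balanced copy is just one of the tonal patterns omnitonality guarantees, giving ${\rm bal}(n,T) \le {\rm ot}(n,T) \le (k-1)n$. Your parity case split (and, in the odd case, the fact that omnitonality supplies both the $(\lceil k/2\rceil,\lfloor k/2\rfloor)$ and $(\lfloor k/2\rfloor,\lceil k/2\rceil)$ patterns, either of which suffices for Definition \ref{def:bal}) is precisely the needed justification.
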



\section{Open problems and further research}\label{sec:OP}

In this section, we discuss some of the many problems and variations that one can consider.\\ 

Corollary \ref{cor:char-BP} and Theorem \ref{thm:fsp} provide the necessary and sufficient conditions for a graph $G$ to be balanceable and, respectively, omnitonal. From those structural results we can deduce, for instance, that trees are omnitonal, and therefore also balanceable. To find more families of graphs which are omnitonal and/or balanceable is one of our main interests. Hence, we have to deal with the recognition problem and its complexity.

\begin{problem}
What is the computational complexity of determining if a graph $G$ is omnitonal or balanceable?
\end{problem}

When considering the balanceable problem, we have dealt, up to now, only with graphs with an even number of edges. We know that paths and stars with even edge number are balanceable and we have determined the corresponding parameters and extremal colorings. The odd case for these graphs should not be very different from the even case and our proofs may be adapted easily. In the context of complete graphs, we know that $K_4$ is the only balanceable complete graph with an even number of edges. However, the odd case,  which could be interesting, has still to be settled.

\begin{problem}
Let $K_m$ be the complete graph on $m$ vertices, where $m \equiv 2, 3 \;({\rm mod} \; 4)$. Determine for which $m$'s $K_m$ is balanceable and for which not and determine, for the positive cases, ${\rm bal}(n,K_m)$ and ${\rm Bal}(n,K_m)$.
\end{problem}

Of course, there are many other graph families, like certain tree types (with small diameter or bounded maximum degree, for example), cycles, etc. that we have not discussed yet and which could be interesting to study, too. Also, in case of an odd number of edges, the strongly balanceable notion mentioned in the footnote of Definition \ref{def:bal} can be another direction to follow.\\

The graph family of amoebas was born from the study of balanceable and omnitonal graphs, but it is interesting by its own. Thus, the following problems may be considered.

\begin{problem}
What is the computational complexity of determining if a given graph $G$ is an amoeba?
\end{problem}

In this work, we have fully concentrated on the balanceable and the omnitonal problems, but of course one can consider the problem of whether a graph is $r$-tonal as given in Definiton~\ref{def:rtonal}. Clearly, such a graph has to fulfill the conditions of Theorem \ref{thm:char-r-tonal}. Another possibility to consider is, a bit more restrictive, to ask if, given a positive integer $r$, a graph $G$ appears $(r,e(G)-r)$-colored in every $2$-coloring of the complete graph with enough edges from both colors; or one can even look for the stronger version of whether both the $(r,e(G)-r)$-colored as well as the $(e(G)-r,r)$-colored graph $G$ are unavoidable. Certainly, for these versions, there will be a corresponding theorem to Theorem \ref{thm:char-r-tonal}. Another idea is, given a graph $G$ with a fix edge coloring $f$, check if this graph with this particular pattern, say $(G,f)$, is unavoidable in every $2$-coloring of the complete graph with enough edges from each color. By Theorem \ref{thm:RTZ}, this is the case if and only if $(G,f)$ is contained in any balanced type-$A(t)$ as well as in any balanced type-$B(t)$ coloring of $K_n$ (for $n$ large enough and so chosen, along with $t$, that the coloring is balanced). For example, for $k \ge 3$, consider a $k$-path $P$ colored in such a way that there are three consecutive edges with the pattern red-blue-red. Then $P$ is not contained in any balanced type-$A(t)$ coloring of $K_n$ where the red edges induce a complete graph on $t$ vertices. Hence, not every pattern in a path will be unavoidable. On the other side, because the edges in a star play all the same role, and stars are omnitonal, for every pattern of a star we could try to determine the minimum number edges of each color that guarantees the existence of a star with this precise pattern. 

Another natural direction to consider is to have more than two colors. The notion of balanceable graphs, strongly balanceable graphs and omnitonal graphs can be easily modified to the case where $c \ge 3$ colors are used. A generalization of Theorem \ref{thm:RTZ} to $c \ge 2$ colors was already done in \cite{Alp}. However, when considering omnitonal graphs, it is easy to show that, for $c \ge 3$ colors, they must be disconnected, in sharp contrast to the case of $c =2$. This direction is postponed to another paper under preparation.

\section*{Acknowledgements}

We thank  Alp M\"{u}yesser for a discussion on part of this paper, pointing to us references \cite{CuMo,FoSu}, and to his manuscript \cite{Alp} which also contains a sketch of a similar proof of Theorem \ref{thm:RTZ}  concerning $\epsilon$-balanced colorings.\\

The second author was partially supported by PAPIIT IN111819 and CONACyT project 282280. The third author was partially supported by PAPIIT IN116519 and CONACyT project 282280.



\begin{thebibliography}{50}

\bibitem{AlCa} N. Alon, Y. Caro. On three zero-sum Ramsey-type problems, J. Graph Theory 17 (1993), 177--192.

\bibitem{BiDi90} A. Bialostocki, P. Dierker. Zero-sum Ramsey theorems, Congress. Numer. 70 (1990), 119--130.

\bibitem{Bol} Bollob\'as, B. Extremal Graph Theory, New York: Dover Publications (2004).

\bibitem{Alp} M. Bowen, A. Lamaison, N. A. M\"uyesser. Finding unavoidable colorful patterns in multicolored graphs, arXiv:1807.02780.

\bibitem{Ca96} Y. Caro. Zero-sum problems - A survey. Discr. Math. 152 (1996), 93--113.

\bibitem{CaYu} Y. Caro, R. Yuster. On zero-sum and almost zero-sum subgraphs over $\mathbb{Z}$, Graphs Combin. 32 (2016), 49--63.


\bibitem{CHM3} Y. Caro, A.  Hansberg, A.  Montejano. Amoebas, preprint.

\bibitem{CHM1} Y. Caro, A.  Hansberg, A.  Montejano. Zero-sum $K_m$ over $\mathbb{Z}$ and the story of $K_4$, Graphs and Combin., in press. https://arxiv.org/pdf/1708.09777.pdf
 
\bibitem{CHM2} Y. Caro, A.  Hansberg, A.  Montejano. Zero-sum subsequences in bounded-sum $\{-1, 1\}$-sequences, J. Combin. Theory Ser. A 161 (2019), 387--419.


\bibitem{CuMo} J. Cutler, B. Mont\'agh. Unavoidable subgraphs of colored graphs, Discrete Math. 308 (2008), no. 19, 4396--4413. 


\bibitem{ErGa} P. Erd\H{o}s, T. Gallai. On maximal paths and circuits of graphs, Acta Math. Acad. Sci. Hungar. 10 (1959), 337--356.


\bibitem{FoSu} J. Fox, B. Sudakov. Unavoidable patterns, J. Combin. Theory Ser. A 115 (2008), no. 8, 1561--1569.


\bibitem{FuSi} Z. F\"uredi, M. Simonovits. The history of degenerate (bipartite) extremal graph problems. Erd\H{o}s centennial, 169--264, Bolyai Soc. Math. Stud., 25, J\'anos Bolyai Math. Soc., Budapest (2013).



\bibitem{HaPl} F. Harary, M. J.  Plantholt, Classification of interpolation theorems for spanning trees and other families of spanning subgraphs, J. Graph Theory. 13 (1989), no.6, 703--712.



\bibitem{KST}
T. K\H{o}v\'ari, V. T. S\'os, and P. Tur\'an: On a problem of K. Zarankiewicz, Colloq. Math. 3 (1954), 50--57.



\bibitem{Pun} N. Punnim. Interpolation theorems on graph parameters, Southeast Asian Bull. Math. 28 (2004), no. 3, 533--538.
 

\bibitem{Soi} A. Soifer. Ramsey Theory - Yesterday, Today, and Tomorrow, Birkh\"auser (2011).


\bibitem{Zhou} S. Zhou, Sanming. Interpolation theorems for graphs, hypergraphs and matroids,
Discrete Math. 185 (1998), no. 1-3, 221--229. 
 


\end{thebibliography}
\end{document}